\documentclass{amsart}
\usepackage{microtype,amssymb,algorithm,algpseudocode,graphicx,color,cite,amsthm,enumitem}
\usepackage{hyperref}
\usepackage[dvipsnames]{xcolor}

% Do it here, if we want to change the notation for the orthogonal group.
%\def\OO{{\mathbb O}}

\def\C{{\mathbb C}}
\def\RR{{\mathbb R}}

\def\?{{\bf ??}}

\def\dim{{\rm dim}}

 \newtheorem{theorem}{Theorem}[section]
\newtheorem{lemma}[theorem]{Lemma} 
 
\newtheorem{definition}[theorem]{Definition} 
\newtheorem{corollary}[theorem]{Corollary}
\newtheorem*{claim*}{Claim}
 
\newtheorem{remark}[theorem]{Remark}

\newtheorem{example}[theorem]{Example}  

\numberwithin{equation}{section}

%Rekha definitions
\def\CC{\mathbb C}
\def\rank{\mathrm{rank}}
\def\f{\frac}

%Hon definitions
\newcommand{\Diag}{\mbox{\rm Diag}\,}
\newcommand{\FF}{{\mathbb{F}}}
\newcommand{\VV}{{\mathcal{V}}}
\newcommand{\EE}{{\mathcal{E}}}
\newcommand{\TT}{{\mathcal{T}}}
\newcommand{\MM}{{\mathcal{M}}}

\newcommand{\wt}{\widetilde}

\newcommand{\ra}{\rightarrow}

\newcommand{\lt}{\left}
\newcommand{\rt}{\right}
\newcommand{\ol}{\overline}

\newcommand{\parallelslant}{\mathbin{\!/\mkern-5mu/\!}}
  
\date{\today}

\title[ED degree of orthogonally invariant varieties]{The Euclidean Distance Degree of Orthogonally Invariant Matrix Varieties}
\author[D. Drusvyatskiy \and H.L. Lee \and G. Ottaviani \and R.R. Thomas]
{Dmitriy Drusvyatskiy \and Hon-Leung Lee \and Giorgio Ottaviani \and Rekha R. Thomas}
\address{Department of Mathematics, University of Washington, Box 354350, Seattle, WA 98195-4350}
\email{[ddrusv, hllee, rrthomas]@uw.edu}
\address{*Universit\`a di Firenze, viale Morgagni 67A, 50134 Firenze,
Italy}
\email{ottavian@math.unifi.it}
\thanks{Drusvyatskiy was partially supported by the AFOSR YIP award FA9550-15-1-0237. Lee and Thomas were partially supported by the NSF grant DMS-1418728.}

\begin{document}

\begin{abstract} 
	We show that the Euclidean distance degree of a real orthogonally invariant matrix variety equals the 
	Euclidean distance degree of its restriction to diagonal matrices. 
We illustrate how this result can greatly simplify calculations in concrete circumstances.
   %This result can greatly simplify calculations, as we illustrate.
\end{abstract}

\maketitle
\section{Introduction}
The problem of minimizing the Euclidean distance (ED) of an observed data point $y \in \RR^n$ to a real algebraic variety $\mathcal{V} \subseteq \RR^n$ arises frequently in applications, and amounts to solving the polynomial optimization problem
$$\textrm{minimize} \quad \sum^n_{i=1} (y_i-x_i)^2 \quad \textrm{ subject to }\quad x\in \mathcal{V}.$$
The algebraic complexity of this problem is closely related to the number of complex regular critical points of $y$ on the Zariski closure $\mathcal{V}_\CC$ of $\mathcal{V}$. We will call such points the {\em ED critical points} of $y$ with respect to $\mathcal{V}$; see Definition~\ref{defn:edcrit}. The authors of \cite{DHOST} showed that the number of 
ED critical points of a general data point $y\in \CC^n$ is a constant, and  hence is an invariant of $\mathcal{V}$. This number is called the {\em Euclidean distance degree} (ED degree) of $\mathcal{V}$. As noted in \cite{DHOST}, the computation of $\mathrm{EDdegree}(\mathcal{V})$ can be subtle, since it may change considerably under a linear transformation of $\mathcal{V}$.

In this work, we explore the ED degree of {\em orthogonally invariant} matrix varieties $\mathcal{M} \subseteq \RR^{n \times t}$, meaning those varieties $\mathcal{M}$ satisfying
$$U\mathcal{M}V^\top = \mathcal{M}\quad \textrm{ for all real orthogonal matrices}\quad U \in O(n), V \in O(t).$$
Without loss of generality, suppose $n\leq t$.
Clearly, membership of a matrix $M$ in such a variety $\mathcal{M}$ is fully determined by its vector of singular values $\sigma(M)=(\sigma_1(M), \ldots, \sigma_n(M))$, where we use the convention
$\sigma_{i-1}(M) \geq \sigma_i(M)$ for each $i$. %\comment{Giorgio: the map $\sigma$ cannot be extended over $\CC$, we could point out this fact somewhere.}
Indeed, we may associate with any orthogonally invariant matrix variety $\mathcal{M}$ its {\em diagonal restriction} $S=\{x:\Diag(x)\in \mathcal{M}\}$. The variety $S$ thus defined is {\em absolutely symmetric} (invariant under signed permutations) and satisfies the key relation $\mathcal{M}=\sigma^{-1}(S)$. Conversely, any absolutely symmetric set $S\subseteq \RR^n$ yields the orthogonally invariant matrix variety $\sigma^{-1}(S)$; see e.g. \cite[Theorem~3.4]{DLT} and \cite[Proposition 1.1]{man}. 

In this paper, we prove the elegant formula
\begin{equation}\label{ED_res}
\boxed{\mathrm{EDdegree}(\MM) = \mathrm{EDdegree}(S) \tag{$\star$}.}
\end{equation}
In most interesting situations, the diagonal restriction $S\subseteq\RR^n$ has simple geometry, as opposed to the matrix variety $\mathcal{M}$, and hence our main result \eqref{ED_res} provides elementary and transparent means to compute the ED degree of $\MM$ by working with the simpler object $S$. 
Interesting consequences flow from there. For example, consider 
the {\em $r$-th rank variety} 
$$\RR^{n\times t}_r:=\{X\in\RR^{n\times t}: \rank\, X\leq r\},$$
and the {\em essential variety}
$$\mathcal{E}:=\{X\in \RR^{3\times 3}: \sigma_1(X)=\sigma_2(X),\, \sigma_3(X)=0\}$$
from computer vision \cite{epipolarPaper_v2,MaybankBook, hartley-zisserman-2003}; both $\RR^{n\times t}_r$ and  $\mathcal{E}$ are orthogonally invariant. The diagonal restrictions of $\RR^{n\times t}$ and $\mathcal{E}$ are finite unions of linear subspaces and their ED degrees are immediate to compute. Moreover, our results readily imply that  all ED critical points of a general real data matrix $Y$ on $\RR^{n\times t}_r$ and on $\mathcal{E}$ are real. This result has been previously shown for $\RR^{n\times t}_r$ in \cite{DHOST} -- a generalization of the Eckart-Young theorem -- and is entirely new for the essential variety $\mathcal{E}$. A related further investigation of the essential variety appears in \cite{FKO}.

Our investigation of orthogonally invariant matrix varieties fits in a broader scope.
The idea of studying orthogonally invariant matrix sets $ \MM$ via their diagonal restrictions $S$ -- the theme of our paper -- is not new, and goes back at least to von Neumann's theorem on {\em unitarily invariant matrix norms} \cite{von_Neumann}. In recent years, it has become clear that various analytic properties of $\MM$ and $S$ are in one-to-one correspondence, and this philosophy is sometimes called the ``transfer principle''; see for instance, \cite{man}. For example, $\MM$ is $C^p$-smooth around a matrix $X$ if and only if $S$ is $C^p$-smooth around $\sigma(X)$ \cite{spec_id,der,diff_2,diff_1,dc_new}. Other properties, such as {\em convexity} \cite{cov_orig}, {\em positive reach} \cite{spec_prox}, {\em partial smoothness} \cite{spec_id}, and {\em Whitney conditions} \cite{mather} follow the same paradigm. In this sense, our paper explores the transfer principle for the ED degree of algebraic varieties. To the best of our knowledge, this is the first result in this body of work that is rooted in algebraic geometry. %A possibility to understand a relationship between other algebraic invariants of $\mathcal{M}$ and $S$ is an intriguing research program.

%The transfer paradigm has the important and useful 
%feature that in many instances, the calculation over the set $S \subseteq \RR^n$ is simpler than the original one over $\MM$. 

Though our main result \eqref{ED_res} is easy to state, the proof is subtle;  moreover, the result itself is surprising in light of the discussion in \cite[Section 5]{DLT}. The outline of the paper is as follows.
In Section~\ref{sec:complexification} we investigate invariance properties of the Zariski closure $\MM_\CC \subseteq \CC^{n \times t}$ of an orthogonally invariant matrix variety $\MM\subseteq \RR^{n \times t}$, as well as the correspondence between irreducible components of $S$ and those of $\MM$.
In Section~\ref{sec:algebraicSVD}, we discuss ``algebraic singular value decompositions'' for general matrices $Y \in \CC^{n \times t}$, leading to
Section~\ref{sec:complexED} containing our main results. When $S$ is a subspace arrangement, our results yield particularly nice consequences generalizing several classical facts in matrix theory -- the content of Section~\ref{subsec:subspaceArrangements}.
%%%%%%%%%%%%%%%%%%%%%%%%%%%%%%%%%%%%%%%%%%%%%%%%%%%%%%%%%%

\section{Zariski closure, irreducibility, and dimension of matrix varieties }\label{sec:complexification}
Setting the stage, we begin with some standard notation. For the fields $\FF = \RR$ or $\FF =\CC$, the symbol $\FF[x]= \FF[x_1,\ldots,x_n]$ will denote the ring of polynomials in $x_1,\ldots,x_n$ with coefficients in $\FF$. 
Given polynomials $f_1,\ldots,f_s\in \FF[x]$ the set $\VV:=\{x\in \FF^n \ : \  f_1(x) = \cdots = f_s(x)=0\} $ is called an (algebraic) {\em variety} over $\FF$.
The {\em Zariski closure} of an arbitrary set $T$ in $\CC^n$, denoted $\ol{T}$,  is the smallest variety over $\CC$ containing $T$.  
Unless otherwise specified, the topology on $\CC^n$ is fixed to be the {\em Zariski topology}, obtained by defining the closed sets to be  the varieties over $\CC$.
The topology on any subset of $\CC^n$ will then always be the one induced by the Zariski topology.

Consider a real algebraic variety $\mathcal{V} \subseteq \RR^n$.
 The {\em vanishing ideal} of $\mathcal{V}$ is defined to be $I(\mathcal{V}) := \{ f \in \RR[x] \,:\, f(x) = 0 \,\, \text{ for all }\, x \in \mathcal{V} \}$.
Viewing $\mathcal{V}$ as a subset of $\CC^n$, the Zariski closure of $\VV$, denoted $\VV_\CC$, 
can be written as $\{x\in \CC^n \,:\, f(x)=0\text{ for all } f\in I(\VV)\}$; see e.g. \cite{whitney}. Note this notation is slightly redundant since by definition we have $\overline{\mathcal{V}}=\VV_\CC$. Nonetheless, we prefer to keep both symbols to ease notation when appropriate. 

%The goal of this section is threefold:  Section~\ref{subsec:inv} discusses how invariance properties of a variety are inherited by its closure;  Section~\ref{subsec:components} explores the irreducible components of orthogonally invariant matrix varieties; and Section~\ref{subsec:dim} provides a recipe for computing the dimension of such a variety from its diagonal restriction. 

%In this section, we examine how $\mathcal{V}_\CC$  inherits invariance properties of $\mathcal{V}$, ultimately specializing to  orthogonally invariant matrix varieties $\MM$ and their absolutely symmetric counterparts $S$. At the end, we will also show how to compute 
%the dimension of $\MM_\CC$ from $S$.

\subsection{Invariance under closure}\label{subsec:inv}
Consider a group $G$ acting linearly on $\CC^n$. Then  $G$ also acts on $\CC[x]$ via 
$$
g \cdot f(x) = (x \mapsto f(g\cdot x))\qquad \text{ for any } g\in G, \ f\in \CC[x].
$$
A subset $T\subseteq \CC^n$ is {\em $G$-invariant} if $g\cdot x$ lies in $T$ for any $g\in G$ and $x\in T$. 
A polynomial $f\in \CC[x]$ is {\em $G$-invariant} provided $g\cdot f = f$ for all $g\in G$. %\textcolor{green}{(Do we use this anywhere?)}
We begin with the following elementary result.
\begin{lemma}\label{lem:closure is Ginvariant}
 If a set $T\subseteq \CC^n$ is $G$-invariant, then its closure
$\ol{T}$ is also $G$-invariant.
\end{lemma}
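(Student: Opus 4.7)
The plan is to exploit the fact that the linear action of any $g\in G$ defines a Zariski-continuous self-map of $\CC^n$, so closure is preserved under the action. Fix $g\in G$ and let $\phi_g\colon \CC^n\to \CC^n$ be the map $x\mapsto g\cdot x$. Since $G$ acts linearly, each coordinate of $\phi_g$ is a polynomial in $x_1,\ldots,x_n$, so $\phi_g$ is a morphism of affine varieties, and in particular it is continuous in the Zariski topology (the preimage of any variety under a polynomial map is cut out by the pullbacks of the defining polynomials).

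From $G$-invariance of $T$ we have $\phi_g(T)\subseteq T\subseteq \ol{T}$. Taking Zariski closures and using continuity of $\phi_g$ together with the fact that $\ol{T}$ is already closed, we obtain
$$
\phi_g(\ol{T}) \;\subseteq\; \ol{\phi_g(T)} \;\subseteq\; \ol{T}.
$$
Since this holds for every $g\in G$, the set $\ol{T}$ is $G$-invariant.

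Alternatively, one can argue at the level of the vanishing ideal $I=I(\ol{T})\subseteq \CC[x]$. For any $f\in I$ and $g\in G$, the polynomial $g\cdot f$ satisfies $(g\cdot f)(x)=f(g\cdot x)$. Since $T$ is $G$-invariant, $g\cdot x\in T$ whenever $x\in T$, and thus $(g\cdot f)(x)=0$ for every $x\in T$. Because $\ol{T}$ is the smallest variety containing $T$, we conclude $g\cdot f\in I$. Applied to an arbitrary $x\in \ol{T}$, this gives $f(g\cdot x)=(g\cdot f)(x)=0$ for every $f\in I$, so $g\cdot x\in \ol{T}$.

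There is essentially no serious obstacle here; the content of the lemma is simply the Zariski-continuity of a polynomial self-map of $\CC^n$. The only small bookkeeping issue is to be careful with the convention $(g\cdot f)(x)=f(g\cdot x)$ used in the paper, which is what makes the ideal-theoretic version of the argument work cleanly.
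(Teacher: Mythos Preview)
Your proof is correct and follows essentially the same approach as the paper: both exploit that each $g\in G$ acts by a Zariski-continuous (indeed linear) self-map of $\CC^n$, from which invariance of the closure follows immediately. The only cosmetic difference is that the paper uses the fact that $\mu_g$ is a linear \emph{isomorphism} to write the equality $\mu_g(\ol{T})=\ol{\mu_g(T)}=\ol{T}$ in one line, whereas you use only continuity to obtain the inclusion $\phi_g(\ol{T})\subseteq\ol{T}$, which already suffices.
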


\begin{proof}
Fixing $g\in G$, the map $\mu_g: \CC^n \ra \CC^n$ given by $\mu_g(x)=g\cdot x$ is a linear isomorphism. Hence, assuming $T$ is $G$-invariant, we deduce $\mu_g(\ol{T}) = \ol{\mu_g(T)} = \ol{T}$, as claimed.
\end{proof}

We now specialize the discussion to the main setting of the paper. 
For a positive integer $s$, the symbol $O(s)$ will denote the set of all $s \times s$ real orthogonal matrices. This is both a group and a real variety and its Zariski closure $O_{\CC}(s)$ is the set of all $s \times s$ complex orthogonal matrices --- those satisfying $Q^\top  Q=QQ^\top =I$. 
Henceforth, we fix two positive integers $n$ and $t$ with $n \leq t$, and consider the groups $O(n)\times O(t)$ and 
$O_{\CC}(n)\times O_{\CC}(t)$, along with the group $\Pi_n^\pm$ of all signed permutations of $\{1,\ldots,n\}$. Recall that we always consider the action  of $O(n)\times O(t)$ on $\RR^{n\times t}$ and the action of $O_{\CC}(n)\times O_{\CC}(t)$ on $\CC^{n\times t}$ by conjugation $(U,V)\cdot X=UXV^\top $.

Now suppose $\MM \subseteq \RR^{n \times t}$ is a $O(n) \times O(t)$-invariant (orthogonally invariant) matrix variety. Then Lemma \ref{lem:closure is Ginvariant} shows that 
$\MM_\CC$ is $O(n) \times O(t)$-invariant. We now prove the stronger statement: 
$\MM_\CC$ is invariant under the larger group  $O_{\CC}(n) \times O_{\CC}(t)$. 

\begin{theorem}[Closure invariance] \label{thm:invariant varieties}
A matrix variety $\MM \subseteq \RR^{n \times t}$ is  $O(n) \times O(t)$-invariant if and only if $\MM_\CC$ is $O_{\CC}(n) \times O_{\CC}(t)$-invariant. 
Similarly, a variety $S \subseteq \RR^n$ is  $\Pi_n^\pm$-invariant if and only if $S_\CC$ is $\Pi_n^\pm$-invariant.
\end{theorem}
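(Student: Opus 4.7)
The plan splits each biconditional into an easy direction and a substantive direction. The easy (``if'') directions come for free from the identity $\MM = \MM_\CC \cap \RR^{n\times t}$, and analogously $S = S_\CC \cap \RR^n$; this holds because $\MM$ is cut out over $\RR$ by its real vanishing ideal, and the real points of the complex variety $V_\CC(I(\MM))$ are then just $\MM$ itself. Granting this, if $\MM_\CC$ is $O_\CC(n) \times O_\CC(t)$-invariant, then for $(U,V) \in O(n)\times O(t) \subseteq O_\CC(n) \times O_\CC(t)$ and $X \in \MM \subseteq \MM_\CC$, the product $UXV^\top$ is both complex-orthogonally transported into $\MM_\CC$ and manifestly real, hence lies in $\MM$. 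The corresponding assertion for $S$ is the same, with the simplification that $\Pi_n^\pm$ is a finite group of real matrices and therefore equal to its own Zariski closure. The forward direction for $\Pi_n^\pm$ is then immediate from Lemma~\ref{lem:closure is Ginvariant}, since the acting group already agrees with its complex counterpart.

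The substantive content is the forward direction for matrices. Here the plan is to package the action as a single morphism and exploit the closedness of preimages. Concretely, I would consider
$$\Phi : \CC^{n\times n} \times \CC^{t\times t} \times \CC^{n\times t} \to \CC^{n\times t}, \qquad \Phi(U,V,X) = UXV^\top,$$
and the Zariski-closed preimage $W := \Phi^{-1}(\MM_\CC)$. By hypothesis on $\MM$, we have $\Phi\bigl(O(n) \times O(t) \times \MM\bigr) \subseteq \MM \subseteq \MM_\CC$, so $O(n) \times O(t) \times \MM \subseteq W$. Now I would invoke two auxiliary closure facts: first, the paper has already noted that $\overline{O(s)} = O_\CC(s)$; second, in the Zariski topology on affine space, the closure of a Cartesian product equals the product of the closures of its factors. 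Taking Zariski closures of the containment above and using these two facts yields
$$O_\CC(n) \times O_\CC(t) \times \MM_\CC \;=\; \overline{O(n)} \times \overline{O(t)} \times \overline{\MM} \;=\; \overline{O(n) \times O(t) \times \MM} \;\subseteq\; W,$$
which is precisely the claimed $O_\CC(n) \times O_\CC(t)$-invariance of $\MM_\CC$.

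The main obstacle is less conceptual than it is bookkeeping: cleanly invoking the two closure facts above. The equality $\overline{O(s)} = O_\CC(s)$ is already asserted in the paragraph preceding the theorem, and can be justified by a dimension count, since the real Lie group $O(s)$ and the complex variety $O_\CC(s)$ share dimension $s(s-1)/2$. The product-of-closures identity $\overline{A \times B} = \overline{A} \times \overline{B}$ is standard: the reverse inclusion holds because $\overline{A}\times\overline{B}$ is closed and contains $A\times B$, while the forward inclusion follows from a two-step vanishing-ideal argument (fix $y_0 \in B$, extend to $\overline{A}$; then fix $x_0 \in \overline{A}$, extend to $\overline{B}$). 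With these two ingredients in place, the preimage argument above completes the proof.
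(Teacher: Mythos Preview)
Your proof is correct and follows essentially the same idea as the paper's: both exploit that a polynomial map has Zariski-closed preimages and that $O(s)$ is Zariski-dense in $O_\CC(s)$. The only organizational difference is that the paper first invokes Lemma~\ref{lem:closure is Ginvariant} to know $\MM_\CC$ is $O(n)\times O(t)$-invariant, then fixes $X\in\MM_\CC$ and extends in the group variable alone, whereas you extend in all three variables simultaneously via the product-of-closures identity; this is a packaging choice rather than a different argument.
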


\begin{proof}
Since the proofs are similar, we only prove the first claim. The backward implication is trivial. Suppose $\MM $ is  $O(n) \times O(t)$-invariant. 
Let $X\in \MM_\CC$ be fixed. Then the map $\gamma_X: O_\CC(n) \times O_\CC(t)  \ra \CC^{n\times t}$ defined by $\gamma(g) = g\cdot X$ is continuous. 
Lemma \ref{lem:closure is Ginvariant} yields the inclusion $O(n) \times O(t)\subseteq \gamma_X^{-1}(\MM_\CC)$. Since $\gamma_X^{-1}(\MM_\CC)$ is closed by continuity, we conclude $O_\CC(n)\times O_\CC(t)  
\subseteq
 \gamma_X^{-1}(\MM_\CC)$. This completes the proof.
\end{proof}

\subsection{Irreducible components of orthogonally invariant varieties}\label{subsec:components}
For the rest of the section, fix a $\Pi^{\pm}_n$-invariant (absolutely symmetric) variety $S$ in $\RR^n$. Then the 
$O(n) \times O(t)$-invariant matrix set $\mathcal{M}:=\sigma^{-1}(S)$ is a real variety in $\RR^{n\times t}$; see \cite[Theorem 3.4]{DLT} or  
\cite[Proposition 1.1]{man}. Moreover, the diagonal restriction  $\{ x \in \RR^n \,:\, \Diag(x) \in \MM \}$
coincides with $S$.
Here, we call a $n \times t$ matrix $D$ diagonal if and only if $D_{ij}=0$ whenever $i \neq j$, and
for any vector $x\in \RR^n$ the symbol $\Diag(x)$ denotes the diagonal matrix with $D_{ii}=x_i$ for each $i=1,\ldots,n$.

In this section, we highlight the correspondence between the irreducible components of $S$ and those of $\MM$. Recall that 
a real or complex variety $\mathcal{V}$ is {\em irreducible} 
if it cannot be written as a union $\mathcal{V}=\mathcal{V}_1\cup\mathcal{V}_2$ of two proper subvarieties $\mathcal{V}_1$ and $\mathcal{V}_2$.
Any variety $\mathcal{V}$ can be written as a union of finitely many irreducible subvarieties $\mathcal{V}_i$ satisfying $\mathcal{V}_i\nsubseteq \mathcal{V}_j$ for distinct indices $i$ and $j$. The varieties $\mathcal{V}_i$ are called the {\em irreducible components} of $\mathcal{V}$, and are uniquely defined up to indexing.

Coming back to the aim of this section, let $\{S_i\}^k_{i=1}$ be the irreducible components of $S$. The varieties $S_i$ are typically not absolutely symmetric. 
Hence we define their symmetrizations $S^{\pi}_i:=\bigcup_{\pi \in \Pi^{\pm}_n} \pi S_i$ and the real varieties $\mathcal{M}_i:=\sigma^{-1}(S^{\pi}_i)$.
It is standard that a signed permutation maps an irreducible component of $S$ to another irreducible component of $S$.

We record the following elementary observation for ease of reference.

\begin{lemma}\label{lem:incl}
For any pair of indices $i,j$, the following implications hold:
\begin{align*}
S_{i}^{\pi}\subseteq S_{j}^{\pi} \quad &\Longrightarrow \quad S_{i}^{\pi}= S_{j}^{\pi}, \textup{ and } \\
\MM_i\subseteq \MM_j \quad &\Longrightarrow \quad \MM_i= \MM_j
\end{align*}
\end{lemma}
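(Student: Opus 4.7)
The plan is to exploit the defining property of irreducible components: no $S_i$ is contained in any other $S_j$. The text has already noted that $\Pi_n^\pm$ permutes the irreducible components of $S$, so each symmetrization $S_i^\pi = \bigcup_{\pi \in \Pi_n^\pm} \pi S_i$ is itself the union of the $\Pi_n^\pm$-orbit of $S_i$, and hence is a union of irreducible components of $S$. This observation is the engine behind both implications.

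For the first implication, suppose $S_i^\pi \subseteq S_j^\pi$. Then $S_i \subseteq \bigcup_{\pi \in \Pi_n^\pm} \pi S_j$. Since $S_i$ is irreducible and the union is finite, $S_i$ must lie inside a single piece $\pi S_j$. But $\pi S_j$ is also an irreducible component of $S$, so the no-containment property of irreducible components forces $S_i = \pi S_j$. Applying the full group $\Pi_n^\pm$ to both sides and taking unions then gives $S_i^\pi = (\pi S_j)^\pi = S_j^\pi$.

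For the second implication, suppose $\MM_i \subseteq \MM_j$. Pick any $x \in S_i^\pi$, and let $\tilde x$ be the vector obtained from $x$ by taking absolute values of the entries and then sorting them in nonincreasing order. Since $S_i^\pi$ is absolutely symmetric, $\tilde x \in S_i^\pi$, and by construction $\sigma(\Diag(\tilde x)) = \tilde x$. Hence $\Diag(\tilde x) \in \sigma^{-1}(S_i^\pi) = \MM_i \subseteq \MM_j$, which yields $\tilde x \in S_j^\pi$ and, by absolute symmetry again, $x \in S_j^\pi$. Therefore $S_i^\pi \subseteq S_j^\pi$, and the first implication gives $S_i^\pi = S_j^\pi$, hence $\MM_i = \sigma^{-1}(S_i^\pi) = \sigma^{-1}(S_j^\pi) = \MM_j$.

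The proof is short, and no step is a genuine obstacle, but the one place care is required is the transition in the second implication from $\MM_i \subseteq \MM_j$ back to $S_i^\pi \subseteq S_j^\pi$: one must realize each element of $S_i^\pi$ as the singular value vector of a matrix in $\MM_i$, which is exactly what the sorted-absolute-value trick accomplishes.
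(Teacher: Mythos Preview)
Your proof is correct and follows essentially the same approach as the paper. The first implication is argued identically; for the second, the paper simply says it ``follows immediately'' (relying on the already-established fact that the diagonal restriction of $\sigma^{-1}(S_i^\pi)$ is $S_i^\pi$), whereas you spell out that passage explicitly via the sorted-absolute-value trick.
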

\begin{proof}
If $S_{i}^{\pi}\subseteq S_{j}^{\pi}$, then we deduce that $S_i= \bigcup_{\pi\in \Pi^{\pm}_n} (S_i\cap \pi S_j)$. Hence for some $\pi\in \Pi^{\pm}_n$, the inclusion $S_i\subseteq \pi S_j$ holds. Since both $S_i$ and $\pi S_j$ are irreducible components of $S$, it must be that $S_i= \pi S_j$ and hence, $S_{i}^{\pi}= S_{j}^{\pi}$, as claimed. The second implication follows immediately.
\end{proof}

For any $U\in O(n)$ and $V\in O(t)$, the map 
$X\mapsto UXV^\top $  is an automorphism of $\MM_i$, and therefore maps an irreducible component of $\mathcal{M}_{i}$ to another irreducible component of $\mathcal{M}_{i}$. We now show that this action is transitive, just as the action of $\Pi^{\pm}_n$ on the components of $S_i^{\pi}$. 

%Hence it is easy to check that if $S^{\pi}_i$ is contained in some $S^{\pi}_j$, then $S^{\pi}_i$ and $S^{\pi}_j$ must actually coincide. Similarly if  $\mathcal{M}_i$ is contained in $\mathcal{M}_j$, then $\mathcal{M}_i$ and $\mathcal{M}_j$ must coincide. 

%Note also that $\mathcal{M}$ is the union of the varieties $\mathcal{M}_{i}$.
%\smallskip

\begin{lemma}\label{lem:tran}
For any index $i$, the group $O(n)\times O(t)$ acts
transitively on the irreducible components of $\MM_i$. Consequently, the real variety $\MM_i$ is equidimensional.
\end{lemma}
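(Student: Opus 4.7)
The plan is to realize $\MM_i$ as a single $O(n)\times O(t)$-orbit of $\Diag(S_i)$ and then exploit the orthogonal action to pass between irreducible components. First I would establish the identity
$$\MM_i \;=\; O(n)\cdot \Diag(S_i)\cdot O(t).$$
The inclusion $\Diag(S_i)\subseteq \MM_i$ is clear because $\sigma(\Diag(x))$ is a signed permutation of $x$ and $S_i^\pi$ is by construction closed under signed permutations; invariance of $\MM_i$ under $O(n)\times O(t)$ then gives $O(n)\cdot \Diag(S_i)\cdot O(t)\subseteq \MM_i$. For the reverse inclusion, given $M\in\MM_i$ write $\sigma(M)=\pi y$ for some $y\in S_i$ and $\pi\in\Pi_n^\pm$, and expand the SVD $M=U\Diag(\sigma(M))V^\top$ with $U\in O(n)$, $V\in O(t)$. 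The crucial step is to realize the signed permutation $\pi$ as $\Diag(\pi y)=P\Diag(y)Q^\top$, where $P\in O(n)$ encodes both the permutation and the sign flips and $Q\in O(t)$ is the permutation matrix obtained by extending the underlying permutation on $\{1,\ldots,n\}$ to act trivially on the last $t-n$ coordinates. Substituting yields $M=(UP)\Diag(y)(VQ)^\top\in O(n)\cdot\Diag(S_i)\cdot O(t)$.

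Next, since $\Diag$ is a linear embedding and $S_i$ is irreducible, $\Diag(S_i)$ is an irreducible subvariety of $\MM_i$, and therefore lies in some irreducible component $W_1$ of $\MM_i$. For every $(U,V)\in O(n)\times O(t)$ the map $X\mapsto UXV^\top$ is a linear automorphism of $\RR^{n\times t}$ preserving $\MM_i$, so $UW_1V^\top$ is again an irreducible component of $\MM_i$: irreducible because $W_1$ is, and maximal because any strictly larger irreducible subvariety of $\MM_i$ would pull back to one strictly containing $W_1$. Combining with the previous step,
$$\MM_i \;=\; O(n)\cdot \Diag(S_i)\cdot O(t)\;\subseteq\; O(n)\cdot W_1\cdot O(t)\;\subseteq\; \MM_i,$$
so $\MM_i$ is the union of the orbit of $W_1$. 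Since $\MM_i$ has only finitely many irreducible components, this orbit is finite, and any other component of $\MM_i$ must, by irreducibility and maximality, coincide with one of its members. Hence $O(n)\times O(t)$ acts transitively on the components, and equidimensionality follows at once, because all components are carried onto one another by linear automorphisms.

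The principal obstacle is the concrete bookkeeping in the first step: namely, making precise how a signed permutation of $\RR^n$ corresponds to a pair in $O(n)\times O(t)$. This is where the asymmetry $n\le t$ enters, since the sign data naturally lands on the $O(n)$ side while the permutation part must be extended from $\{1,\ldots,n\}$ to $\{1,\ldots,t\}$ on the $O(t)$ side. Once this identification is verified the remainder of the argument is a clean orbit argument, invoking only the finiteness of the irreducible decomposition and the fact that linear automorphisms preserve irreducibility and dimension.
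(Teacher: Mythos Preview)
Your argument is correct. Both proofs ultimately hinge on the irreducibility of $S_i$, but they use it in opposite directions. The paper takes an arbitrary component $\mathcal{H}$ of $\MM_i$, lets $\Gamma$ be the union of its $O(n)\times O(t)$-translates and $Z$ the union of the remaining components, and then passes \emph{down} via diagonal restriction: both $\{x:\Diag(x)\in\Gamma\}$ and $\{x:\Diag(x)\in Z\}$ are absolutely symmetric varieties whose union contains $S_i$, so by irreducibility $S_i$ lies in one of them, forcing $Z=\emptyset$. Your argument instead goes \emph{up}: you first establish the set-theoretic identity $\MM_i=O(n)\cdot\Diag(S_i)\cdot O(t)$ via real SVD together with the explicit realization of each signed permutation as a pair $(P,Q)\in O(n)\times O(t)$, then use irreducibility to place $\Diag(S_i)$ inside a single component $W_1$, whose orbit then exhausts $\MM_i$.

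The paper's route is a bit shorter because it never needs the explicit orbit description of $\MM_i$ or the bookkeeping for signed permutations; it only uses the abstract correspondence $\MM_i\leftrightarrow S_i^\pi$ through diagonal restriction. Your route has the advantage of being completely constructive: it exhibits the single component $W_1$ containing $\Diag(S_i)$ as a canonical base point for the action, and the signed-permutation calculation you flag as the ``principal obstacle'' is essentially the content of equation~\eqref{eqn:real_desc} in the paper, so it is worth spelling out in any case.
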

\begin{proof}
Let  $\mathcal{H}$ be an irreducible component of $\mathcal{M}_{i}$. %Consider the orbit $(O(n)\times O(t))\cdot \mathcal{H}$ and 
	Note that the set
	 $\Gamma:=\bigcup\{U\mathcal{H}V^\top : U\in O(n), V\in O(t)\}$ is a union of some irreducible components of $\mathcal{M}_{i}$. Let $Z$ be the union of the irreducible components of $\mathcal{M}_{i}$ not contained in $\Gamma$ (if any). Observe that $Z$ is an orthogonally invariant variety. Hence the two absolutely symmetric varieties $\{x: \Diag(x)\in \Gamma\}$ and $\{x: \Diag(x)\in Z\}$ cover $S^{\pi}_i$.
	Since $S_i$ is irreducible, either $\Gamma$ or $Z$ coincides with all of $\MM_i$. Since the latter is impossible by construction, we conclude that $\Gamma=\MM_i$, as claimed.
\end{proof}

We end with the following theorem, which will play a key role in the proof of Lemma~\ref{transfer of tangent spaces}, leading to the main result of the paper.

\begin{theorem}\label{thm:union_comp}
Each variety $\mathcal{M}_{i}$ is a union of some irreducible components of $\mathcal{M}$.
\end{theorem}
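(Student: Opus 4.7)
The plan is to show that every irreducible component of each $\mathcal{M}_i$ is already an irreducible component of the ambient variety $\mathcal{M}$. Combined with the identity $\mathcal{M} = \bigcup_i \mathcal{M}_i$ (which follows from $S = \bigcup_i S_i^\pi$ together with the fact that $\sigma^{-1}$ preserves unions), this will prove the theorem. The argument will proceed by contradiction and combine two inputs: a ``single-component containment'' argument based on irreducibility, together with the transitivity provided by Lemma \ref{lem:tran} to promote a single containment of components to a containment of the whole varieties $\mathcal{M}_i$.

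First, I would record a preliminary observation: any irreducible subvariety $\mathcal{H}'$ of $\mathcal{M}$ must lie inside some $\mathcal{M}_j$. Indeed, the decomposition $\mathcal{H}' = \bigcup_j (\mathcal{H}' \cap \mathcal{M}_j)$ expresses $\mathcal{H}'$ as a finite union of closed subvarieties, so irreducibility of $\mathcal{H}'$ forces $\mathcal{H}' \subseteq \mathcal{M}_j$ for some $j$, and hence $\mathcal{H}'$ sits inside some irreducible component $\mathcal{H}''$ of $\mathcal{M}_j$.

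Now suppose toward a contradiction that some irreducible component $\mathcal{H}$ of $\mathcal{M}_i$ is not a component of $\mathcal{M}$. Then $\mathcal{H}$ is properly contained in some irreducible component $\mathcal{H}'$ of $\mathcal{M}$. By the previous paragraph, $\mathcal{H}' \subseteq \mathcal{H}''$ for some irreducible component $\mathcal{H}''$ of a variety $\mathcal{M}_j$, giving $\mathcal{H} \subsetneq \mathcal{H}''$. At this point I would apply the action of an arbitrary pair $(U,V) \in O(n) \times O(t)$: since $\mathcal{M}_i$ and $\mathcal{M}_j$ are both orthogonally invariant, $U\mathcal{H}V^\top$ and $U\mathcal{H}''V^\top$ are irreducible components of $\mathcal{M}_i$ and $\mathcal{M}_j$ respectively, with $U\mathcal{H}V^\top \subseteq U\mathcal{H}''V^\top$. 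Invoking Lemma \ref{lem:tran}, every irreducible component of $\mathcal{M}_i$ arises as some $U\mathcal{H}V^\top$, so every component of $\mathcal{M}_i$ is contained in some component of $\mathcal{M}_j$. Taking unions yields $\mathcal{M}_i \subseteq \mathcal{M}_j$, and then Lemma \ref{lem:incl} upgrades this to $\mathcal{M}_i = \mathcal{M}_j$.

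This equality supplies the contradiction: $\mathcal{H}$ and $\mathcal{H}''$ are then both irreducible components of the same variety $\mathcal{M}_i$, yet they are properly comparable, which is impossible. I expect the main obstacle to be exactly this passage from a single containment $\mathcal{H} \subsetneq \mathcal{H}''$ of components to the global relation $\mathcal{M}_i \subseteq \mathcal{M}_j$; the transitivity statement in Lemma \ref{lem:tran} is precisely the tool that makes this work, and without it one would only see one pair of components related and no way to conclude anything about $\mathcal{M}_i$ as a whole. The rest of the argument is bookkeeping about irreducible decompositions.
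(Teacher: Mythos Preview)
Your proof is correct and follows essentially the same approach as the paper. Both arguments hinge on the same three ingredients: the irreducibility trick that forces any irreducible piece of $\mathcal{M}$ into a single $\mathcal{M}_j$, Lemma~\ref{lem:tran} to promote a containment of one component to the inclusion $\mathcal{M}_i\subseteq\mathcal{M}_j$, and Lemma~\ref{lem:incl} to turn that into an equality. The paper packages this directly---showing each component $\mathcal{H}$ of $\mathcal{M}_i$ is sandwiched as $\mathcal{H}\subseteq C_l\subseteq\mathcal{M}_i$ for some component $C_l$ of $\mathcal{M}$---while you arrive at the same conclusion by contradiction, but the substance is identical.
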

\begin{proof}
%	 \textcolor{red}{Dima to Dima: Add explanation}
Let $\{C_l\}$ be the set of  irreducible components of  $\MM$. Then for any index $l$,  $C_l=\bigcup_j (C_l\cap\mathcal{M}_{j})$ which implies that $C_l$ is contained in $\mathcal{M}_{j}$ for some index $j(l)$. 
%We will denote any such index by $j(l)$, emphasizing the dependence on $l$. 

%Fix now a real variety $\MM_i$ and let $\mathcal{H}$ be an irreducible component of $\mathcal{M}_{i}$. Since for any $U\in O(n)$ and $V\in O(t)$, the map $X\mapsto UXV^\top $ is an automorphism of $\MM$, we deduce that $UDV^\top $ is also an irreducible component of $\mathcal{M}_{i}$. Thus the orthogonally invariant set $\Gamma:=\bigcup\{UDV^\top : U\in O(n), V\in O(t)\}$ is a union of some irreducible components of $\mathcal{M}_{i}$. Observe that the two absolutely symmetric varieties $\{x: \Diag(x)\in \Gamma\}$ and $\{x: \Diag(x)\in \MM_i\setminus\Gamma\}$ cover $S^{\pi}_i$.
%Since $S_i$ is irreducible, we immediately deduce the equality $\Gamma=\MM_i$.

Fix an $\MM_i$
and let $\mathcal{H}$ be an irreducible component of $\MM_i$. From the equality $\mathcal{H}=\bigcup_l  (C_l\cap \mathcal{H})$ we conclude that the inclusion $\mathcal{H}\subseteq C_l$ holds for some index $l$. This implies that $\mathcal{H}\subseteq C_l\subseteq \MM_{j(l)}$, and hence by Lemma~\ref{lem:tran}, $\MM_i\subseteq \MM_{j(l)}$. Lemma~\ref{lem:incl} then implies the equality $\MM_i=\MM_{j(l)}$, yielding $\mathcal{H}\subseteq C_{l}\subseteq \MM_{i}$. Taking the union of this inclusion over all the irreducible components $\mathcal{H}$ of $\MM_{i}$ and the corresponding $C_l$, we deduce that $\MM_i$ is a union of some irreducible components of $\MM$. 
\end{proof}
% hence for some $D\subseteq C_l\subseteq \mathcal{M}_{i(l)}$ for some index $l$. Therefore 
%$\MM_j=\Gamma \subseteq  \mathcal{M}_{i(l)}$

Since closures of irreducible components of a real variety $\mathcal{V}$ are the irreducible components of $\mathcal{V}_{\CC}$, Theorem~\ref{thm:union_comp} immediately implies that $\overline{\MM_i}$ is a union of some irreducible components of $\MM_{\CC}$.

%{\color{green} Giorgio: Notation to be fixed because $S^{\pi}_i$ may be equal to another $S^{\pi}_j$}

\subsection{Dimension of orthogonally invariant varieties}\label{subsec:dim}
We next show how to read off the dimension of $\MM_\CC$ from the absolutely symmetric variety $S\subseteq \RR^n$. 
To this end, note first that since the equality 
$\textup{dim}(\MM_\CC) = \textup{dim}(\MM)$ holds (see \cite[Lemma 8]{whitney}), it suffices to compute the dimension of the real variety $\MM$ from $S$. We will  assume that $\Pi^{\pm}_n$ acts transitively on the irreducible components of $S$, that is in the notation of Section \ref{subsec:components} we have $S_i^{\pi}=S$ for all indices $i$. If this is not the case, we can treat each set $S_i^{\pi}$ separately. With this simplification, both varieties $S$ and $\MM$ are equidimensional (Lemma~\ref{lem:tran}). 

%Let $d$ be the dimension of $S$ and define the $d$-dimensional smooth manifold.
%$$S_d:=\{x\in S:~~ S \textrm{ is a } C^2\textrm{-smooth } d\textrm{-dimensional manifold around } x\}.$$
% It is standard that $S_{d}$ is an absolutely symmetric, semi-algebraic, $C^2$-smooth $d$-dimensional manifold. Moreover, $S_{d}$ is dense in $S$ in the Euclidean topology.
%Then $\sigma^{-1}(S_d)$ is a semi-algebraic set that is dense in $\mathcal{M}$ in the Euclidean topology.  On the other hand, the dimension of $\sigma^{-1}(S_d)$ as a semi-algebraic set equals the dimension of the variety $\mathcal{M}$, since the latter is the algebraic closure of $\sigma^{-1}(S_d)$ in $\RR^{n\times t}$; see \cite[Proposition 2.8.2]{BCR}. 

%Noting that each set $\sigma^{-1}(S_d)$ is a smooth semi-algebraic manifold \cite[Theorem 2.7]{spec_id}, we have established the formula 
%$$\dim\, \mathcal{M}=\max_{d=1,\ldots,n}~\{\textrm{dimension of } \sigma^{-1}(S_d) \textrm{ as a}\textrm{ manifold}\}.$$
%Hence our task is reduced to the following. Given an absolutely symmetric $C^2$-smooth manifold $\widehat{S}\subseteq\RR^n$ compute the dimension of the manifold $\widehat{\mathcal{M}}:=\sigma^{-1}(\widehat{S})$.
%We now describe how to do so with ease. 
The following recipe follows that in \cite[Section 2.3]{spec_id} and \cite{man} and hence we skip some of the explanations. 
The basic idea is to understand the dimension of the fiber $\sigma^{-1}(x^*)$ where $x^*\in S$ is (carefully) chosen so that the sum
of the dimension of the fiber and the dimension of $S$  equals $\dim(\MM)$.

Fixing notation, consider  the convex cone
$$\RR^n_{+,\geq}:=\{x\in\RR^n: x_1\geq x_2\geq\ldots\geq x_n\geq 0\}.$$
Observe that $\RR^n_{+,\geq}$ is exactly the range of $\sigma$ on $\RR^{n \times t}$.
Along with a point $x \in \RR^n_{+,\geq}$, we associate the partition $\mathcal{P}_x = \{P_1, \ldots, P_{\rho_x}, P_0\}$ of the index set $\{1,\ldots,n\}$ so that 
$x_i = x_j$ if and only if $i,j \in P_l$, and 
$x_i >x_j$ for any $i\in P_q$ and $j\in P_r$ with $q >r$. We assume that $P_0$ contains the indices of the zero coordinates in $x$, and we define 
$p_l := | P_l |$. It could be that $p_0=0$ for a given $x$. On the other hand, we have $p_l > 0$ for all $l=1,\ldots,\rho_x$.
Recall the equality 
\begin{align*} 
\sigma^{-1}(x) = \{ U \, \Diag(x) \, V^\top  \,:\, U \in O(n), V \in O(t) \}.
\end{align*}
Let $$(O(n) \times O(t))_x := \{ (U,V) \in O(n) \times O(t) \,:\, \Diag(x) = U \, \Diag(x) \, V^\top  \}$$ denote the stabilizer of $\Diag(x)$, under the action of $O(n) \times O(t)$. Then one can check that $(U,V)$ lies in the stabilizer $(O(n) \times O(t))_x$ if and only if 
$U$ is block diagonal with blocks $U_i \in O(p_i)$ for $i=0,\ldots,\rho_x$ and $V$ is block diagonal with blocks $V_i \in O(p_i)$ for $i=1,\ldots,\rho_x$, and a block $V_0 \in O(p_0 + (t-n))$. Further, $U_i V_i^\top  = I$ for all $i=1,\ldots, \rho_x$ which means that the $U_i$'s determine the corresponding $V_i$'s for all $i$ except $i=0$. This implies that the dimension of $(O(n) \times O(t))_x$ is 
\begin{align*}
\textup{dim}((O(n) \times O(t))_x ) = \sum_{l=0}^{\rho_x} \frac{p_l (p_l-1)}{2}  + \frac{(p_0+t-n)(p_0+t-n-1)}{2}
\end{align*}
yielding 
\begin{align}\label{eq:dim orbit}
\textup{dim}(\sigma^{-1}(x)) & =  \textup{dim}(O(n) \times O(t)) - \textup{dim}((O(n) \times O(t))_x )\nonumber\\
&= \frac{n(n-1)+t(t-1)}{2}-\sum_{l=0}^{\rho_x} \frac{p_l (p_l-1)}{2}  - \frac{(p_0+t-n)(p_0+t-n-1)}{2}\nonumber\\
& =  \sum_{0 \leq i < j \leq \rho_x}  p_i p_j  + \frac{t(t-1)}{2}  - \frac{(p_0+t-n)(p_0+t-n-1)}{2}.
\end{align}
Here we used the observation $$ \frac{n(n-1)}{2}-\sum_{l=0}^{\rho_x} \frac{p_l (p_l-1)}{2}={\sum_{l=0}^{\rho_x} p_l \choose 2}-\sum_{l=0}^{\rho_x}{p_l\choose 2}=\sum_{0 \leq i < j \leq \rho_x}  p_i p_j.$$
%We may assume without loss of generality that $\widehat{S} \subseteq \RR^n$ is connected and has a nonempty intersection with $\RR^n_{+,\geq}$.

 For a partition $\mathcal{P}$ of $[n]$, define the set $\Delta_{\mathcal{P}} := \{ x \in \RR^n_{+,\geq} \,:\, \mathcal{P}_x = \mathcal{P} \}$. The set of all such $\Delta$'s defines an affine stratification of $\RR^n_{+,\geq}$. Let $\mathcal{P}_\ast$ correspond to a stratum $\Delta_\ast$ in this stratification satisfying $S \cap \Delta_\ast \neq \emptyset$ and having maximal dimension among all strata that have a nonempty intersection with $S$. Then for any point $x^\ast \in S \cap \Delta_\ast$, we can choose a sufficiently small $\delta>0$ satisfying
 $S\cap B_{\delta}(x^\ast)\subseteq \Delta_\ast$. Hence the fibers $\sigma^{-1}(x)$ have the same dimension for all $x\in S\cap B_{\delta}(x^\ast)$ and the preimage  $\sigma^{-1}(S\cap B_{\delta}(x^\ast))$ is an open (in the Euclidean topology) subset of $\MM$. Taking into account that both $S$ and $\MM$ are equidimensional, we deduce 
 $$\dim(\sigma^{-1}(S))=\dim( S)+\dim (\sigma^{-1}(x^\ast)).$$
 Appealing to \eqref{eq:dim orbit}, we arrive at the formula 
% $\textup{dim} (\sigma^{-1}(\widehat{S})) = \textup{dim} (\sigma^{-1}(\widehat{S} \cap B(x^\ast, \delta)))$ where $B(x^\ast, \delta)$ is the ball with center $x^\ast$ and radius $\delta$. Since $\sigma^{-1}(\widehat{S} \cap B(x^\ast, \delta)))$ is a fibration, we deduce that $\textup{dim}(\widehat{\MM}) $ equals 
\begin{align} \label{eq:dim formula}
\dim(\MM)=\textup{dim}(S) + \left( \sum_{0 \leq i < j \leq \rho^\ast}  p_i^\ast p_j^\ast  \right) + \frac{t(t-1)}{2}  - \frac{(p_0^\ast+t-n)(p_0^\ast+t-n-1)}{2}.
\end{align}

\begin{example} [Rank variety]
	{\rm
Recall the rank variety $\RR^{n \times t}_r$ of matrices of rank at most $r$. 
	In this case, $S$ is the union of all coordinate planes in $\RR^n$ of dimension $r$ and $S_\CC$ is the set of all $r$-dimensional coordinate planes in $\CC^n$. Also, $\MM_\CC = \CC^{n \times t}_r$, the set of all matrices in $\CC^{n \times t}$ of rank at most $r$.

Note that $S$ is equidimensional. Then along with a point $x^\ast$ we have $p_0^\ast = n-r$ and $p_i^\ast = 1$ for all $i=1, \ldots,r$. Applying \eqref{eq:dim formula} we get that the dimension of $\CC^{n \times t}_r$ is 
$$ r + \left({r \choose 2} + r(n-r) \right)+ \frac{t(t-1)}{2} - \frac{(t-r)(t-r-1)}{2} = r(t+n-r).$$}
\end{example}

\begin{example}[Essential variety] \label{ex:essential variety}
	{\rm
The essential variety is $\EE = \{ E \in \RR^{3 \times 3} \,:\, \sigma_1(E) = \sigma_2(E), \,\,\sigma_3(E) = 0 \}$. Its Zariski closure $\EE_\CC \subseteq \CC^{3 \times 3}$ is known to be irreducible and of dimension six \cite{demazure}. In this case, $S \subseteq \RR^3$ consists of the six lines defined by 
$x_1=\pm x_2$, $x_1=\pm x_3$ and $x_2 = \pm x_3$ with the remaining coordinate set to zero in each case.

We can verify $\dim(\EE_\CC) =6$ using \eqref{eq:dim formula}. Indeed, picking a generic point $x^\ast$ on the line $x_1=x_2$ in $\RR^3_+$, we see that $\mathcal{P}_{x^\ast}$ has $p_0^\ast = 1$ and $p_1^\ast = 2$. Now applying the formula \eqref{eq:dim formula} we get  
$\textup{dim}(\mathcal{E}_\CC) = 1 + 1 \cdot 2 + 3 - 0 = 6$.}
\end{example}

%%%%%%%%%%%%%%%%%%%%%%%%%%%%%%%%%%%%%%%%%%%%%%%%%%%%%%%%%%%%

\section{Algebraic Singular Value Decompositions and GIT quotients} \label{sec:algebraicSVD}
In this section we fix a $\Pi^{\pm}_n$-invariant variety $S\subseteq\RR^n$ and the induced $O(n) \times O(t)$-invariant matrix variety $\mathcal{M}:=\sigma^{-1}(S)$. The description of $\mathcal{M}$ as the preimage $\sigma^{-1}(S)$ is not convenient when seeking to understand the algebraic geometric  correspondences between $\mathcal{M}$ and $S$, since $\sigma$ is not a polynomial map. Instead, we may equivalently write 
\begin{equation}\label{eqn:real_desc}
\MM = \{U \, \Diag(x) \, V^\top  \ : \   U\in O(n), \  V\in O(t), \  x\in S\}.
\end{equation}
In this notation, it is clear that $\MM$ is obtained from $S$ by an algebraic group action -- a description that is more amenable to analysis. Naturally then to understand geometric correspondences between the closures $\MM_{\CC}$ and $S_{\CC}$, we search for a description analogous to \eqref{eqn:real_desc}, with $\MM$, $S$, $O(n)$, and $O(t)$ replaced by their Zariski closures $\MM_{\CC}$, $S_{\CC}$, $O_{\CC}(n)$, and $O_{\CC}(t)$. The difficulty is that an exact equality analogous to 
\eqref{eqn:real_desc}
usually fails to hold; instead, equality holds only in a certain generic sense that is sufficient for our purposes. We now make this 
precise.

\subsection{Algebraic SVD}
Our strategy revolves around an ``algebraic singular value decomposition'', a notion to be made precise shortly. Note that the common extension of a singular value decomposition (SVD) from real to complex matrices 
using unitary matrices, their conjugates, and the Hermitian metric does not fit well in the algebraic setting because unitary matrices form a real (but not a complex) variety and conjugation is not an algebraic operation. In particular, it is not suitable for studying the $\mathrm{EDdegree}$ of a matrix variety. 
Hence we will need an algebraic analog of SVD that uses complex orthogonal matrices. For a recent geometric treatment of SVD rooted in algebraic geometry see the survey \cite{SVD_surv}.

\begin{definition}[Algebraic SVD]
{\rm
We say that a matrix $A \in \CC^{n \times t}$ admits an {\em algebraic SVD} if it can be factored as $A = U D V^\top $ for some orthogonal matrices $U\in O_\CC(n)$ and $V\in 
O_\CC(t)$, and a complex diagonal matrix $D\in\CC^{n\times t}$. }
\end{definition}

Not all matrices admit an algebraic SVD; indeed, this is the main obstruction to an equality analogous to  \eqref{eqn:real_desc} in which the varieties $\MM$, $S$, $O(n)$, and $O(t)$ are replaced by their closures.
A simple example is the matrix $A={\left( {\begin{smallmatrix}1&i\\0&0\end{smallmatrix}}\right)}$, with  $i=\sqrt{-1}$. Indeed, 
in light of the equality $AA^\top =0$, if it were possible to write $A=UDV^\top $ for some $U,V \in O_\CC(2)$ and a diagonal matrix $D$, then we would deduce that $UDD^\top  U^\top =0$ which implies that $A=0$, a contradiction. 
%Note that $\rank(A) \neq \rank(AA^\top )$ in this case. 
%; more precisely, a decomposition of $A \in \CC^{n \times t}$ (when possible) as 
%a product $UDV^\top $ with $U \in O_\CC(n)$, $V \in O_\CC(t)$ and $D \in \CC^{n \times t}$ a diagonal matrix in the sense that $D_{ij} = 0$ for all $i \neq j$. 
%We will call  such a factorization an {\em algebraic singular value decomposition (algebraic SVD)} of $A$. 
Fortunately, the existence question has been completely answered by Choudury and Horn \cite[Theorem~2 \& Corollary~3]{choudhury1987analog}. 

\begin{theorem}[Existence of an algebraic SVD]\label{thm:algebraic SVD}
A matrix $A\in \CC^{n\times t}$ admits an algebraic SVD, if and only if, 
$AA^\top $ is diagonalizable and $\rank(A) = \rank(AA^\top )$. 
\end{theorem}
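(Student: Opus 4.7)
The forward implication is a direct computation. Given an algebraic SVD $A = UDV^\top$, the identity $V^\top V = I$ yields $AA^\top = U(DD^\top)U^\top$ with $DD^\top$ diagonal. Because $U \in O_\CC(n)$ satisfies $U^\top = U^{-1}$, this similarity exhibits $AA^\top$ as diagonalizable. Moreover $\rank(A) = \rank(D)$ equals the number of nonzero diagonal entries of $D$, which coincides with $\rank(DD^\top) = \rank(AA^\top)$.

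For the converse, the central auxiliary fact I would establish first is: \emph{every complex symmetric diagonalizable matrix $M$ admits an orthogonal diagonalization $M = U \Lambda U^\top$ with $U \in O_\CC(n)$ and $\Lambda$ diagonal}. To prove this, decompose $\CC^n = \bigoplus_\mu E_\mu$ into eigenspaces of $M$; for $v \in E_\mu$, $w \in E_\nu$ with $\mu \neq \nu$, the symmetry of $M$ gives $\mu\, v^\top w = (Mv)^\top w = v^\top M w = \nu\, v^\top w$ and so $v^\top w = 0$. Thus the complex bilinear form $(v,w) \mapsto v^\top w$ is block diagonal with respect to the eigenspace decomposition, and since it is globally nondegenerate it must be nondegenerate on each $E_\mu$. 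Over $\CC$ every nondegenerate symmetric bilinear form admits an orthonormal basis, and assembling such bases across the eigenspaces produces the desired $U$.

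Applying this lemma to $AA^\top$ yields $U \in O_\CC(n)$ with $AA^\top = U \Lambda U^\top$, $\Lambda = \Diag(\lambda_1, \ldots, \lambda_n)$. Setting $B := U^\top A$ gives $BB^\top = \Lambda$, so the rows $b_i$ of $B$ satisfy $b_i^\top b_j = \lambda_i \delta_{ij}$. Reorder so that $\lambda_i \neq 0$ exactly for $i \leq r$. Then $b_1, \ldots, b_r$ are linearly independent (their Gram matrix is invertible), and the rank hypothesis $\rank(A) = \rank(AA^\top) = r$ combined with the bilinear-orthogonality relations forces $b_i = 0$ for $i > r$. Choose scalars $d_i$ with $d_i^2 = \lambda_i$ and set $v_i := b_i / d_i$ for $i \leq r$; this produces a bilinear-orthonormal set in $\CC^t$. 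Extend to an orthonormal basis $v_1, \ldots, v_t$ by iterating the same nondegeneracy argument on orthogonal complements. Assemble the $v_i$ as rows of $V \in O_\CC(t)$ and take $D = \Diag(d_1, \ldots, d_r, 0, \ldots, 0) \in \CC^{n \times t}$; a direct check gives $B = DV^\top$, hence $A = UDV^\top$.

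The main obstacle is the auxiliary orthogonal-diagonalization lemma, and specifically the nondegeneracy of the bilinear form on each eigenspace. This step genuinely uses the diagonalizability assumption on $AA^\top$: the bilinear form degenerates on generalized eigenspaces of non-diagonalizable symmetric matrices (for instance on the nilpotent matrix $AA^\top$ arising from the example $A = \bigl( {\begin{smallmatrix} 1 & i \\ 0 & 0 \end{smallmatrix}} \bigr)$ in the paper), which is precisely why such matrices fail to admit an algebraic SVD. Once the lemma is in place, the remaining steps are careful but routine manipulations with the complex bilinear form.
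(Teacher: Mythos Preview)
The paper does not prove this theorem; it is quoted from Choudury and Horn \cite{choudhury1987analog} without argument. Your proposal supplies a correct self-contained proof. The forward direction is immediate, and for the converse your route---first orthogonally diagonalize the complex symmetric matrix $AA^\top$, then normalize the rows of $U^\top A$---is the natural one and is essentially what the cited reference does.

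The auxiliary lemma (a diagonalizable complex symmetric matrix is complex-orthogonally diagonalizable) is the heart of the matter, and your proof of it is sound: bilinear-orthogonality of distinct eigenspaces plus global nondegeneracy forces nondegeneracy on each eigenspace, after which the existence of orthonormal bases over $\CC$ finishes it. You are also right that this is precisely where diagonalizability enters; on a generalized eigenspace the form can degenerate.

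One harmless slip: with the $v_i$ taken as \emph{rows} of $V$, the identity you obtain is $B = DV$, not $B = DV^\top$, so $A = UDV = U D (V^\top)^\top$. Since $V^\top$ is again in $O_\CC(t)$, this does not affect the conclusion.
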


Suppose $A$ admits an algebraic SVD  $A = U \,\Diag(d)\, V^\top $ for some orthogonal matrices $U\in O_\CC(n)$ and $V\in 
O_\CC(t)$, and a vector $d\in\CC^n$. Then the numbers $d^2_i$ are eigenvalues of $A^\top  A$ and $AA^\top $, and 
the columns of $U$ are eigenvectors of $AA^\top $ and the columns of $V$ are eigenvectors of $A^\top  A$, arranged in the same order as $d_i.$ We call the complex numbers $d_i$ the {\it algebraic singular values} of $A$. They are determined up to sign.

We record the following immediate consequence of Theorem~\ref{thm:algebraic SVD} for ease of reference.
\begin{corollary} \label{cor:algebraicSVD as we had}
A matrix $A \in \CC^{n \times t}$ has an algebraic SVD provided 
the eigenvalues of $AA^\top $ are nonzero and distinct.
\end{corollary}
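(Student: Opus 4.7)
The plan is to verify the two hypotheses of Theorem~\ref{thm:algebraic SVD}, namely that $AA^\top$ is diagonalizable and that $\rank(A)=\rank(AA^\top)$, using only the assumption that the eigenvalues of $AA^\top$ are nonzero and distinct.

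First I would handle diagonalizability. Since $AA^\top$ is an $n\times n$ matrix with $n$ distinct eigenvalues, the corresponding eigenvectors are linearly independent (a standard fact: eigenvectors attached to distinct eigenvalues of any linear operator are independent), so they furnish an eigenbasis and $AA^\top$ is diagonalizable. This step is immediate and requires no further comment.

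Next I would address the rank equality. Nonzero eigenvalues mean $\det(AA^\top)\neq 0$, so $AA^\top$ has full rank $n$. On the other hand one always has the chain
\[
n=\rank(AA^\top)\leq \rank(A)\leq n,
\]
because the columns of $AA^\top$ are $\CC$-linear combinations of the columns of $A$ (so $\rank(AA^\top)\leq \rank(A)$), while $\rank(A)\leq n$ follows from $A$ being $n\times t$ with $n\leq t$. Hence $\rank(A)=n=\rank(AA^\top)$.

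Having verified both conditions, Theorem~\ref{thm:algebraic SVD} directly yields the existence of an algebraic SVD of $A$. There is no genuine obstacle here; the corollary is essentially a repackaging of Theorem~\ref{thm:algebraic SVD} under a convenient sufficient condition, and the only thing to be careful about is that over $\CC$ one cannot conflate $AA^\top$ (the relevant object here) with $AA^*$, but since the hypothesis is phrased in terms of $AA^\top$ this issue does not arise.
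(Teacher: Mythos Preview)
Your proof is correct and is precisely the intended argument: the paper itself gives no proof at all, stating only that the corollary is an immediate consequence of Theorem~\ref{thm:algebraic SVD}, and what you have written is exactly the verification of the two hypotheses of that theorem that makes this immediacy explicit.
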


%\textcolor{green}{(do we need to keep the ordering discussion?)}
% The condition $Re(d_i)>0$ may be used to determine them uniquely for the (general) matrices satisfying $Re(d_i)\neq 0$. Note that the matrices $\sqrt{-1}A$, with real $A$, have $Re(d_i)= 0$. 
%If needed, we may also linearly order these singular values $d_i = a_i + i b_i$ 
%by first requiring that their leading terms be positive, i.e., $a_i > 0$ whenever $a_i \neq 0$ and $b_i >0$ otherwise, 
%followed by the lexicographic order on the pairs $(a_i,b_i)$.

Suppose $\VV$ is a variety over $\RR$ or $\CC$. We say that a property holds for a {\em generic point $x\in \VV$} if the set of points $x\in \VV$ for which the property holds contains an open dense subset of $\VV$ (in Zariski topology). In this terminology, Theorem~\ref{thm:algebraic SVD} implies that generic complex matrices $A \in \CC^{n \times t}$ do admit an algebraic SVD. 

We can now prove the main result of this section (cf. equation~\eqref{eqn:real_desc}).
%Let $S\subseteq \RR^n$ be an absolutely symmetric variety and let $\MM=\sigma^{-1}(S)\subseteq \RR^{n\times t}$ be the corresponding orthogonally invariant variety.
%Note then the equality
%\begin{equation} \label{eq:SVD}
%\MM = \{U \, \Diag(x) \, V^\top  \ : \   U\in O(n), \  V\in O(t), \  x\in S\}.
%\end{equation}
%When $\MM=\RR^{n\times t}$ and $S=\RR^n$ this relation restates the classical SVD. 
%Since not every complex matrix admits an algebraic SVD, the relation \eqref{eq:SVD} will usually not hold if we replace $\MM$, $S$, $O(n)$ and $O(t)$ by their closures $\MM_\CC$, $S_\CC$, $O_\CC(n)$ and $O_\CC(t)$ respectively. 
%Nonetheles, the following Theorem shows in particular, that the relation does hold up to closure.
%However, as in Theorem~\ref{thm:algebraic SVD}, generic matrices $A\in \MM_\CC$ admit a decomposition $U \, \Diag(x) \, V^\top $ for some 
%$U\in O_\CC(n)$, $V\in O_\CC(t)$ and $x\in S_\CC$. This happens to be a special case of the following result that is crucial for the proof 
%of our main theorem. 

\begin{theorem}[Generic description] \label{thm:generic points in M_C}
Suppose that a set $Q\subseteq S_\CC$ contains an open dense subset of $S_\CC$. Consider the set 
$$
\mathcal{N}_Q:=\{  U \,\Diag(x)\,V^\top  \ : \  U\in O_{\CC}(n), \  V\in O_{\CC}(t), \ x\in Q  \}.
$$
Then $\mathcal{N}_Q$ is a dense subset of $\MM_\CC$, and $\mathcal{N}_Q$ contains an open dense subset of $\MM_\CC$.
\end{theorem}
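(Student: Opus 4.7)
The plan is to prove the stronger assertion that $\mathcal{N}_Q$ contains a Zariski-open dense subset of $\MM_\CC$, from which density is automatic. Fix an open dense subset $Q_0 \subseteq S_\CC$ contained in $Q$, and consider the polynomial map
\[
\Phi : O_\CC(n) \times O_\CC(t) \times S_\CC \to \CC^{n \times t}, \qquad \Phi(U, V, x) := U \, \Diag(x) \, V^\top.
\]

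First, I would verify the basic containment $\mathcal{N}_Q \subseteq \MM_\CC$. By Theorem~\ref{thm:invariant varieties}, $\MM_\CC$ is $O_\CC(n) \times O_\CC(t)$-invariant. The Zariski closed set $\{x \in \CC^n : \Diag(x) \in \MM_\CC\}$ contains the real set $S$, hence contains $S_\CC$; consequently $\Diag(x) \in \MM_\CC$ for every $x \in Q$, and invariance yields $\Phi(U,V,x) \in \MM_\CC$. Next, since $O_\CC(n) \times O_\CC(t) \times Q_0$ is open (hence constructible) in the domain of $\Phi$, Chevalley's theorem implies that its image $W := \Phi(O_\CC(n) \times O_\CC(t) \times Q_0) \subseteq \mathcal{N}_Q$ is a constructible subset of $\MM_\CC$. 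A standard consequence of Chevalley is that any constructible set contains a Zariski-open dense subset of its own closure, so the problem reduces to showing that $W$ is Zariski dense in $\MM_\CC$.

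For this last step, let $\Omega := \overline{W}$. The image $W$ is invariant under the $O_\CC(n) \times O_\CC(t)$-action on $\CC^{n \times t}$, because $U' \Phi(U,V,x) (V')^\top = \Phi(U'U, V'V, x)$ still has third argument in $Q_0$, so Lemma~\ref{lem:closure is Ginvariant} makes $\Omega$ invariant as well. Taking $U = I_n$ and $V = I_t$ in $\Phi$, we see $W \supseteq \Diag(Q_0)$; since $\Diag$ is a closed linear embedding and $Q_0$ is Zariski dense in $S_\CC$, this gives $\Omega \supseteq \Diag(S_\CC)$. Invariance then upgrades the latter to $\Omega \supseteq \{U \Diag(x) V^\top : U \in O_\CC(n), V \in O_\CC(t), x \in S_\CC\}$, which contains the real variety $\MM$ by equation~(\ref{eqn:real_desc}). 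Since $\overline{\MM} = \MM_\CC$ by definition, $\Omega = \MM_\CC$, as desired.

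The step I expect to be the most delicate is the density argument: the domain of $\Phi$ is reducible and relatively complicated (for instance $O_\CC(n)$ has two connected components, and $S_\CC$ may itself have many irreducible components), so a direct dimension- or fiber-counting argument in each irreducible component of $\MM_\CC$ would require cumbersome bookkeeping via Lemma~\ref{lem:tran} and Theorem~\ref{thm:union_comp}. The invariance of $\Omega$ is the crucial ingredient that lets the argument bootstrap from the nearly trivial inclusion $\Diag(Q_0) \subseteq W$ to the global density $\Omega = \MM_\CC$ in a single stroke.
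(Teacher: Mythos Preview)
Your proof is correct and follows essentially the same route as the paper's: both establish $\mathcal{N}_Q\subseteq\MM_\CC$ via the vanishing-ideal argument plus orthogonal invariance, both reduce the ``contains an open dense subset'' claim to density via Chevalley's theorem, and both prove density by observing that the closure of the image contains $\Diag(S_\CC)$ and hence, by invariance, all of $\MM$. The only cosmetic difference is that the paper packages the density step as the single continuity chain $\overline{\mathcal{N}_Q}=\overline{P(\,\overline{O_\CC(n)\times Q\times O_\CC(t)}\,)}=\overline{\mathcal{N}_{S_\CC}}=\MM_\CC$, whereas you unpack it into the two explicit moves $\Diag(Q_0)\subseteq W\Rightarrow\Diag(S_\CC)\subseteq\Omega$ and then invariance of $\Omega$.
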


\begin{proof}
After we show $\mathcal{N}_Q$ is a dense subset of $\MM_\CC$, 
Chevalley's theorem \cite[Theorem 3.16]{harris} will immediately imply that $\mathcal{N}_Q$ contains an open dense subset of $\MM_\CC$, as claimed. 

We first argue the inclusion 
 $\mathcal{N}_{S_\CC}\subseteq \MM_\CC$ (and hence $\mathcal{N}_{Q}\subseteq \MM_\CC$). To this end, 
%First we prove the result in the case $Q=S_\CC$. 
 for any 
$f\in I(\MM_\CC)$, note that the polynomial $q(x):=f(\Diag(x))$ vanishes on $S$ and therefore on $S_\CC$. 
Hence the inclusion $\{\Diag(x) \ : \  x\in S_\CC\} \subseteq\MM_\CC$ holds. Since $\MM_\CC$ is 
$O_{\CC}(n) \times O_{\CC}(t)$-invariant (Theorem \ref{thm:invariant varieties}), we conclude that $\mathcal{N}_{S_\CC}\subseteq \MM_\CC$, as claimed. Moreover, clearly $\mathcal{M}$ is a subset of $\mathcal{N}_{S_\CC}$, and hence the inclusion
$\MM_\CC\subseteq\overline{\mathcal{N}_{S_\CC}}$ holds. We conclude the equality $\MM_\CC=\overline{\mathcal{N}_{S_\CC}}$.
%Hence $\mathcal{N}_{Q}$ is dense in $\MM_\CC$. \textcolor{green}{(switch the order in the proof?)}

Now suppose that $Q$ contains an open dense subset of  $S_\CC$ and
consider the continuous polynomial map
$P: O_\CC(n) \times S_\CC \times O_\CC(t) \ra \mathcal{M}_\CC$ 
given by
$$
P(U,x,V) := U \, \Diag(x)\, V^\top .
$$
%\comment{Change notation since $G$ stands for a group}
Noting the equations $\ol{Q}=S_\CC$ and 
$\mathcal{N}_Q = P(O_\CC(n)\times Q \times O_\CC(t))$,
we obtain
\begin{align*}
\ol{\mathcal{N}_Q } & = \ol{P(O_\CC(n)\times Q \times O_\CC(t))}
= \ol{P ( \ol{ O_\CC(n)\times Q \times O_\CC(t)  }  )} \\
& = \ol{ P( O_\CC(n)\times \ol{Q} \times O_\CC(t) ) } = \ol{\mathcal{N}_{S_\CC}} = \MM_\CC.
\end{align*}
Hence $\mathcal{N}_Q$ is a dense subset of $\MM_\CC$, as claimed. 
\end{proof}

\begin{remark}\label{rmk:noalgsvd1} {\rm The variety $\MM_\CC$ may contain matrices that do not admit an algebraic SVD and hence the closure operation in Theorem~\ref{thm:generic points in M_C} is not superfluous.
For example the Zariski closure of $\RR^{2\times 2}_1$ contains the matrix ${\left( {\begin{smallmatrix}1&i\\0&0\end{smallmatrix}}\right)}$, which we saw earlier does not have an algebraic SVD.
}
\end{remark}

Though in the notation of Theorem~\ref{thm:generic points in M_C}, the set $\mathcal{N}_{S_\CC}$ coincides with $\mathcal{M}_{\CC}$ only up to closure, we next show that equality does hold unconditionally when restricted to diagonal matrices.
For any matrix $B\in \CC^{n\times n}$ we define $e_1(B), \ldots, e_n(B)$ to be the $n$ coefficients of the characteristic polynomial of $B$, that is  $e_1(B), \ldots, e_n(B)$ satisfy
$$
\det(\lambda I -B) = \lambda^n - e_1(B)\lambda^{n-1} + \cdots + (-1)^n e_n(B).
$$
For any point $b\in \CC^n$, we define $e_i(b) = e_i (\Diag(b))$ for every $i = 1,\ldots,n$. 
In other words, $e_1(b), \ldots, e_n(b)$ are the elementary symmetric polynomials in $b_1,\ldots, b_n$.

%The following is an analog of the fact $S=\{ x\in \RR^n \ : \  \Diag(x)\in \MM\}$ over $\CC$.
\begin{theorem} \label{thm:SC in terms of MC}
The equality, $S_\CC =\{x\in \CC^n \colon  \Diag(x)\in \MM_\CC\}$, holds.
\end{theorem}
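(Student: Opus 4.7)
The plan is to prove the two inclusions separately. The inclusion $S_\CC\subseteq\{x\in\CC^n\colon\Diag(x)\in\MM_\CC\}$ follows by pullback: for any $f\in I(\MM_\CC)$ the polynomial $q(x):=f(\Diag(x))$ vanishes on $S$ (since $\Diag(x)\in\MM$ for $x\in S$), hence on its Zariski closure $S_\CC$.

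For the reverse inclusion, set $T:=\{x\in\CC^n\colon\Diag(x)\in\MM_\CC\}$, a subvariety of $\CC^n$ being the preimage of $\MM_\CC$ under the polynomial embedding $x\mapsto\Diag(x)$. The set $T$ is $\Pi_n^\pm$-invariant: by Theorem~\ref{thm:invariant varieties} the variety $\MM_\CC$ is $O_\CC(n)\times O_\CC(t)$-invariant, and $\Pi_n^\pm$ acts on diagonal matrices through appropriate elements of $O(n)\times O(t)$ (sign flips via diagonal $\pm 1$ matrices on the left, permutations via permutation matrices on both sides extended trivially to $O(t)$). The heart of the argument is the following extension step: every $\Pi_n^\pm$-invariant polynomial $p\in\CC[x]$ that vanishes on $S$ also vanishes on $T$. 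Indeed, since the invariant ring $\CC[x]^{\Pi_n^\pm}$ is generated by the elementary symmetric polynomials in the squares $x_1^2,\ldots,x_n^2$, one may write $p(x)=P(e_1(x^2),\ldots,e_n(x^2))$ for a unique $P\in\CC[y_1,\ldots,y_n]$. Define $\hat p(A):=P(e_1(AA^\top),\ldots,e_n(AA^\top))$, which is $O_\CC(n)\times O_\CC(t)$-invariant (conjugation preserves the characteristic polynomial of $AA^\top$) and satisfies $\hat p(\Diag(x))=p(x)$. For $A\in\MM$ with real SVD $A=U\Diag(\sigma(A))V^\top$ we obtain $\hat p(A)=p(\sigma(A))=0$, because $\sigma(A)\in S$ and $p$ vanishes on $S$. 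Hence $\hat p$ vanishes on $\MM$, and therefore on $\MM_\CC=\overline{\MM}$; evaluating at $\Diag(x)$ for $x\in T$ yields $p(x)=0$.

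To conclude, $T$ and $S_\CC$ are $\Pi_n^\pm$-invariant subvarieties of $\CC^n$ satisfying $S_\CC\subseteq T$ and $I(S_\CC)\cap\CC[x]^{\Pi_n^\pm}=I(T)\cap\CC[x]^{\Pi_n^\pm}$. Since $\Pi_n^\pm$ is finite, the categorical quotient map $q\colon\CC^n\to\Spec\CC[x]^{\Pi_n^\pm}$ is a finite (hence closed) surjection whose fibers are exactly the $\Pi_n^\pm$-orbits, so $\Pi_n^\pm$-invariant closed subvarieties of $\CC^n$ are uniquely determined by their $\Pi_n^\pm$-invariant vanishing ideals. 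This forces $T=S_\CC$.

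The main subtlety I anticipate is the extension step, namely constructing $\hat p$ from $p$ and verifying it lies in $I(\MM_\CC)$; the finite-group descent that concludes $T=S_\CC$ is then standard. Notably, this approach bypasses the generic description Theorem~\ref{thm:generic points in M_C} and relies instead on closure invariance (Theorem~\ref{thm:invariant varieties}) together with the explicit generating set of $\CC[x]^{\Pi_n^\pm}$.
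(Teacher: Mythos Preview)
Your proof is correct and essentially follows the same strategy as the paper's own argument: both hinge on the map $X\mapsto(e_1(XX^\top),\ldots,e_n(XX^\top))$ and on the identification $\CC[x]^{\Pi_n^\pm}=\CC[e_1(x^2),\ldots,e_n(x^2)]$. The paper proceeds in the ``forward'' direction, showing that $\pi(\MM_\CC)$ lands in the closed image $\Omega$ of $S_\CC$ under this map and then matching roots, whereas you proceed in the dual ``backward'' direction, lifting each invariant $p\in I(S_\CC)$ to $\hat p\in I(\MM_\CC)$ and invoking the finite-group quotient to separate orbits. These are two packagings of the same computation.

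One genuine (if modest) difference worth flagging: the paper routes the inclusion $\pi(\MM_\CC)\subseteq\Omega$ through the equality $\MM_\CC=\overline{\mathcal{N}_{S_\CC}}$ from Theorem~\ref{thm:generic points in M_C}, whereas you verify $\hat p\equiv 0$ on $\MM$ directly via the real SVD and then pass to the closure. Your route therefore avoids Theorem~\ref{thm:generic points in M_C} entirely, relying only on Theorem~\ref{thm:invariant varieties} and the explicit generators of the invariant ring, which is a small but clean economy.
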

\begin{proof}
The inclusion $\subseteq$ follows immediately from the inclusion $\mathcal{N}_{S_\CC}\subseteq \MM_\CC$ established in Theorem~\ref{thm:generic points in M_C}.
For the reverse inclusion, define the set $$\Omega:=\{y\in\CC^n \ :  \  y_i=e_i(x_1^2,\ldots, x_n^2)\quad\forall x\in S_\CC\}.$$ %\textcolor{green}{(why is this set closed?)} 
We first claim that $\Omega$ is a variety. To see this, by \cite[Proposition 2.6.4]{sturmfels2008algorithms}, the variety $S_{\CC}$ admits some $\Pi_n^\pm$-invariant defining polynomials $f_1,\ldots,f_k\in \CC[x]$.  Since $f_j$ are invariant under coordinate sign changes, they are in fact symmetric polynomials in the squares $x^2_1,\ldots,x_n^2$. 
 Then by the fundamental theorem of symmetric polynomials, we may write each $f_j$ as some polynomial $q_j$ in the quantities $e_i(x_1^2,\ldots, x_n^2)$. We claim that $\Omega$ is precisely the zero set of $\{q_1,\ldots,q_k\}$. By construction $q_j$ vanish on $\Omega$. Conversely, suppose $q_j(y)=0$ for each $j$.  Letting $x_1^2,\ldots,x^2_n$ be the roots of the polynomial $\lambda^n - y_1\lambda^{n-1} + \cdots + (-1)^n y_n$, we obtain a  point $x\in \CC^n$ satisfying $y_i=e_i(x_1^2,\ldots, x_n^2)$ for each $i$. We deduce then that $x$ lies in $S_{\CC}$ and hence $y$ lies in $\Omega$ as claimed. We conclude that $\Omega$ is closed.
%\comment{This is the GIT-quotient $M_\CC//G$, but we do not need this fact now, compare with the map $\tau$ in the proof of (2) of Lemma \ref{transfer of tangent spaces}.}

Observe the mapping $\pi\colon \MM_\CC\to\CC^n$ defined by $\pi(X)=(e_1(XX^\top ), \ldots, e_n(XX^\top ))$ satisfies $\pi(\mathcal{N}_{S_\CC})\subseteq \Omega$,
and so we deduce $\pi(\MM_\CC)=\pi(\ol{\mathcal{N}_{S_\CC}})\subseteq \ol{\pi(\mathcal{N}_{S_\CC})}\subseteq \Omega$. Hence for any $y\in\CC^n$ satisfying $\Diag(y)\in  \MM_\CC$, there exists $x\in S_\CC$ satisfying 
$e_i(x^2_1,\ldots,x^2_n)=e_i(y^2_1,\ldots,y^2_n)$ for each index $i=1\ldots,n$. We deduce that $x^2_1,\ldots,x^2_n$ and $y^2_1,\ldots,y^2_n$ are all roots of the same characteristic polynomial of degree $n$. Taking into account that  $S_\CC$ is $\Pi^{\pm}_n$-invariant, we conclude that $y$ lies in $S_\CC$. The result follows.
%\comment{For the reverse inclusion, let $T:=\{x\in \CC^n  \ : \  \Diag(x)\in \MM_\CC\}$. We shall prove $I(S_\CC)\subseteq I(T)$. 
%Let $I'$ be the ideal generated by the $\Pi_n^\pm$-invariant polynomials in $I(S_\CC)$. Then by 
%\cite[Proposition 2.6.4]{sturmfels2008algorithms} we know the radical of $I'$ equals $I(S_\CC)$. Therefore it remains to show 
%$I'\subseteq I(T)$. Let $f\in I(S_\CC)$ be $\Pi_n^\pm$-invariant. 
%Then $f(x) = p(x_1^2, \ldots,x_n^2)$ for some symmetric polynomial $p\in \CC[x]$. 
%By the fundamental theorem of symmetric polynomials, 
%we can write $f(x) = q(e_1(X), \ldots, e_n(X))$ 
%where $q\in \CC[x]$ and $X=\Diag(x)$.
%Now consider $F(X):=  q(e_1(X), \ldots, e_n(X))$ as a polynomial in $\CC[X]$. Notice $F$ is $O_\CC(n)\times O_\CC(t)$-invariant as each $e_i(X)$ is. 
%Thus, by the assumption $f\in I(S_\CC)$ and Theorems \ref{thm:invariant varieties}, \ref{thm:generic points in M_C} with $Q=S_\CC$ we know $F\in I(\MM_\CC)$. 
%Thus $F(\Diag(x))=0$ for any $x\in T$. This implies $f\in I(T)$. The proof is complete.}
\end{proof}

We conclude with the following two enlightening corollaries, which in particular characterize matrices in $\mathcal{M}_{\CC}$ admitting an algebraic SVD.

\begin{corollary}[SVD in the closure]  \label{cor:consequences after the theorem of algebraic SVD}
A matrix $X\in \MM_\CC$ admits an algebraic SVD if and only if 
	$XX^\top $ is diagonalizable, $\rank(X) = \rank(XX^\top )$, and the vector of algebraic singular values of $X$ lies in $S_\CC$.
\end{corollary}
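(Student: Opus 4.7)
The plan is to treat the two directions separately, using Choudury--Horn (Theorem~\ref{thm:algebraic SVD}) for the existence criterion of an algebraic SVD, and Theorem~\ref{thm:SC in terms of MC} together with the closure invariance (Theorem~\ref{thm:invariant varieties}) to transfer information between $\MM_\CC$ and $S_\CC$.

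For the forward implication, suppose $X\in\MM_\CC$ admits an algebraic SVD $X=U\,\Diag(d)\,V^\top$ with $U\in O_\CC(n)$ and $V\in O_\CC(t)$. Theorem~\ref{thm:algebraic SVD} immediately yields that $XX^\top$ is diagonalizable and that $\rank(X)=\rank(XX^\top)$. To prove that $d\in S_\CC$, I would rewrite $\Diag(d)=U^\top X V$. Since $U^\top$ and $V$ are complex orthogonal, and $\MM_\CC$ is $O_\CC(n)\times O_\CC(t)$-invariant by Theorem~\ref{thm:invariant varieties}, it follows that $\Diag(d)\in \MM_\CC$. Theorem~\ref{thm:SC in terms of MC} then gives $d\in S_\CC$, as required.

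For the reverse implication the argument is even shorter: the first two hypotheses are precisely the Choudury--Horn conditions, so Theorem~\ref{thm:algebraic SVD} yields that $X$ admits an algebraic SVD. The third hypothesis (membership of the singular values in $S_\CC$) is not needed for existence, but is consistent with the characterization and is useful for subsequent applications.

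There is no substantial obstacle here; the only point worth being careful about is that when we pass from $X=U\Diag(d)V^\top$ to $\Diag(d)=U^\top X V$, we genuinely use the orthogonality of $U$ and $V$ (so that $U^\top=U^{-1}$ remains in $O_\CC(n)$) together with the \emph{group} action, which is why closure invariance in Theorem~\ref{thm:invariant varieties} is the correct hammer. Once $\Diag(d)\in\MM_\CC$ is established, Theorem~\ref{thm:SC in terms of MC} does the rest.
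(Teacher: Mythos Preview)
Your proof is correct and follows exactly the approach the paper takes: the paper's own proof is a one-line appeal to Theorems~\ref{thm:invariant varieties}, \ref{thm:algebraic SVD}, and \ref{thm:SC in terms of MC}, and you have simply unpacked that appeal in detail. Your observation that the third hypothesis is redundant in the reverse implication (it is automatic once $X\in\MM_\CC$ admits an algebraic SVD, by the forward direction) is also correct.
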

\begin{proof}
This follows immediately from Theorems \ref{thm:invariant varieties}, \ref{thm:algebraic SVD}, and \ref{thm:SC in terms of MC}.	
\end{proof}
	
\begin{corollary}[Eigenvalues in the closure]	\label{eigenvalues in the closure}
If $X$ is a matrix in $\MM_\CC$, then the vector of the square roots of the eigenvalues of $XX^\top $ lies in $S_\CC$.  
\end{corollary}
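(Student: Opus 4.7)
The plan is to leverage an intermediate step already established inside the proof of Theorem~\ref{thm:SC in terms of MC}. There the polynomial map $\pi\colon \MM_\CC \to \CC^n$, defined by $\pi(X)=(e_1(XX^\top),\ldots,e_n(XX^\top))$, was shown to land inside the set $\Omega$ consisting of those $y\in\CC^n$ for which there exists $x\in S_\CC$ with $y_i=e_i(x_1^2,\ldots,x_n^2)$ for all $i$. This containment is precisely what is needed: the coefficients of the characteristic polynomial of $XX^\top$ agree with those of the polynomial whose roots are $x_1^2,\ldots,x_n^2$ for some $x\in S_\CC$.

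First, I would apply the containment $\pi(\MM_\CC)\subseteq \Omega$ to the given $X\in\MM_\CC$ to produce some $x\in S_\CC$ with $e_i(x_1^2,\ldots,x_n^2) = e_i(XX^\top)$ for every $i=1,\ldots,n$. Since two monic degree-$n$ polynomials with matching coefficients coincide, the multiset $\{x_1^2,\ldots,x_n^2\}$ equals the multiset of eigenvalues of $XX^\top$. Consequently $(x_1,\ldots,x_n)$ is, after some reordering and a choice of signs, a vector of square roots of the eigenvalues of $XX^\top$. Invoking the $\Pi_n^\pm$-invariance of $S_\CC$ from Theorem~\ref{thm:invariant varieties}, every signed permutation of $(x_1,\ldots,x_n)$ remains in $S_\CC$, so any vector of square roots of eigenvalues of $XX^\top$ lies in $S_\CC$, as claimed.

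There is no serious obstacle; the corollary is essentially a repackaging of the inclusion $\pi(\MM_\CC)\subseteq \Omega$ extracted mid-proof in Theorem~\ref{thm:SC in terms of MC}, combined with the $\Pi_n^\pm$-invariance of $S_\CC$. The only subtle point worth flagging is that $X$ need not admit an algebraic SVD (cf.\ Remark~\ref{rmk:noalgsvd1}), yet the argument sidesteps this entirely by working only with the characteristic polynomial of $XX^\top$, which is defined unconditionally for any matrix.
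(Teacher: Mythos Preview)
Your proof is correct, and it takes a genuinely different route from the paper's own argument. The paper proves the corollary by a topological limiting procedure: it observes that $\mathcal{N}_{S_\CC}$ contains an open dense (hence Euclidean dense) subset of $\MM_\CC$, approximates $X$ by a sequence $X_k = U_k\,\Diag(x^k)\,V_k^\top$ with $x^k\in S_\CC$, and then appeals to continuity of roots of the characteristic polynomial to push the $x^k$ to a limit in $S_\CC$. You instead bypass density and continuity altogether by reusing the purely algebraic inclusion $\pi(\MM_\CC)\subseteq\Omega$ already established inside the proof of Theorem~\ref{thm:SC in terms of MC}; that inclusion was stated there only to handle diagonal matrices, but as you correctly observe, it applies verbatim to every $X\in\MM_\CC$. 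Your approach is shorter and more elementary, avoiding the passage through Euclidean topology and the result on continuous dependence of roots; the paper's approach, on the other hand, is more self-contained in that it does not require the reader to revisit the internals of an earlier proof.
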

\begin{proof}
Recall that, if $\mathcal{U}$ is an open dense subset of a variety $\VV$, then 
$\mathcal{U}$ has nonempty intersection with any irreducible component of $\VV$; see \cite[1.2 Proposition]{borel2012}.
Hence  the intersection of $\mathcal{U}$  with any irreducible component of $\VV$ is open dense in that component, and is Euclidean dense in that component as well;
see \cite[page 60, Corollary 1]{mumford}. Consequently, $\mathcal{U}$ is Euclidean dense in $\VV$.

From Theorem \ref{thm:generic points in M_C} we know $\mathcal{N}_{S_\CC}$ contains an open dense subset of $\MM_\CC$. 
It follows from the above discussion that $\mathcal{N}_{S_\CC}$ is Euclidean dense in $\MM_\CC$.
Given $X\in \MM_\CC$, we let 
$x$ be the vector of the square roots of the eigenvalues of $XX^\top $, which is defined up to sign and order.
We know there is a sequence $X_k := U_k\, \Diag(x^k)\, V_k^\top $, where $U_k\in O_\CC(n)$, $V_k\in O_\CC(t)$, and 
$x^k\in S_\CC$ such that $X_k \ra X$ as $k\ra\infty$. Hence
$$
(e_1(X_k X_k^\top ), \ldots, e_n(X_k X_k^\top )) \ra (e_1(XX^\top ), \ldots, e_n(XX^\top )).
$$
Since roots of polynomials are continuous with respect to the coefficients \cite[Theorem 1]{zedek}, we deduce that the roots of the characteristic polynomial $\det(\lambda I-X_kX^\top _k)$, namely $((x^k_1)^2, \ldots, (x^k_n)^2)$, converge to $(x_1^2,\ldots,x_n^2)$ up to a coordinate reordering of $x^k$'s and $x$. Passing to a subsequence, we deduce that $x_k$ converge to $x$ up to a signed permutation. Since $S_\CC$ is closed, we conclude that $x$ lies in $S_\CC$, as claimed.
\end{proof}

\subsection{GIT perspective of algebraic SVD} 

The algebraic SVD can be viewed from the perspective of Geometric Invariant Theory (GIT) \cite[Chapter 2]{derksen2013}. %We will use this viewpoint in the proof of the key Lemma~\ref{transfer of tangent spaces}.
Let $G$ be the group $O_\CC(n)\times O_\CC(t)$ acting on $\C^{n\times t}$ via  
$(U,V) \cdot A = UAV^\top $. 
For any variety $\VV$ over $\CC$, let $\CC[\VV]$ be the ring of polynomial maps $\VV\ra \CC$. Fix the $G$-invariant variety $\MM_\CC$ and define the {\em invariant ring}
$$
\CC[\MM_\CC]^G:= \{ f\in \CC[\MM_\CC] \ : f \textrm{ is } G\textrm{-invariant}\}
$$
as a subring of $\CC[\MM_\CC]$. 
Consider a map $f\in \CC[\MM_\CC]^G$. Since the map $q(x):=f\circ\Diag(x)$ lies in $\CC[S_\CC]$ and is $\Pi_n^\pm$-invariant,  we may write $q$ as a polynomial map in the values $e_i(x_1^2,\ldots, x_n^2)$. Hence by passing to the limit,
$f$ itself can be expressed as a polynomial over $\CC$ in the ordered sequence of coefficients 
$e_1(XX^\top ) , \ldots, e_n(XX^\top )$. In other words, the following equality holds:
$$
\CC[\MM_\CC]^G = \CC[e_1(XX^\top ) , \ldots, e_n(XX^\top )]
$$
Observe that $\CC[\MM_\CC]^G$ is a finitely generated reduced $\CC$-algebra, and as such, there is a variety over $\CC$ denoted by $\MM_\CC \parallelslant G$, such that 
$\CC[\MM_\CC]^G$ is isomorphic to $\CC[\MM_\CC \parallelslant G]$. This variety (up to isomorphism) is called the 
{\em GIT quotient}, and is denoted by $\MM_\CC\parallelslant G$. Concretely, we may write
$\MM_\CC\parallelslant G$ as the variety corresponding to the ideal 
$$\{f\in \CC[x]: f(e_1(XX^\top ),\ldots,e_n(XX^\top ))=0\quad \textrm{ for all }X\in \MM_{\CC}\}.$$
A bit of thought shows that in our case, we may equivalently write
$$\MM_\CC\parallelslant G=\{y\in\CC^n \ :  \  y_i=e_i(x_1^2,\ldots, x_n^2)\quad\forall x\in S_\CC\}$$
This was already implicitly shown in the proof of Theorem~\ref{thm:SC in terms of MC}.

 The {\em quotient map} $\pi: \MM_\CC \ra \MM_\CC\parallelslant G$ is the surjective polynomial map associated 
to the inclusion $\CC[\MM_\CC]^G \hookrightarrow \CC[\MM_\CC]$. 
To be precise, in our case we have 
$$
\pi( X) = (e_1(XX^\top ) , \ldots, e_n(XX^\top ))
$$
%and $\MM_\CC \parallelslant  G$ to be the image of $\MM_\CC$ under $\pi$. 
Intuitively  $\MM_\CC \parallelslant  G$ can be ``identified'' with the space of closed orbits for the action of $G$ on $\MM_\CC$, but not the orbit space. 
It can be proved that a $G$-orbit in $\MM_\C$ is closed if and only if it is the orbit of a diagonal matrix. 
In other words, the orbit of a matrix $X$ is closed if and only if $X$ admits an algebraic SVD. By contrast, 
all $O(n)\times O(t)$-orbits in $\MM$ are closed (compare these facts with \cite[\S 16]{procesi}).

\section{ED critical points of an orthogonally invariant variety}\label{sec:complexED}

We are now ready to prove our main results characterizing ED critical points of a data point $Y \in \CC^{n \times t}$ with respect to an orthogonally invariant matrix variety $\MM \subseteq \RR^{n \times t}$. 
We first  give the precise definition of an ED critical point; see
\cite[\S 2]{DHOST}.
For any variety $\VV$ over $\RR$ or $\CC$, we let $\VV^{\rm reg}$ be the open dense subset of regular points in $\VV$. Recall that if $\mathcal{V}$ is a union of irreducible varieties $\mathcal{V}_i$, then $\VV^{\rm reg}$ is the union of $\VV^{\rm reg}_i$ minus the points in the intersection of any two irreducible components.
In what follows, for any two vectors $v,w\in\CC^n$, the symbol $v\perp w$ means $v^\top  w =0$, and for any set $Q\subseteq\CC^n$ we define $Q^{\perp}:=\{v\in\CC^n: v^\top  w=0 \textrm{ for all } w\in Q\}$ .

\begin{definition}[ED critical point, ED degree]\label{defn:edcrit} 
{\rm Let $\VV$ be a real variety in $\RR^n$ and consider a data point $y\in \CC^n$. An {\em ED critical point} of $y$ with respect to $\VV$
is a 
point $x\in \VV_\CC^{\textup{reg}}$ such that  
$y-x\in \mathcal{T}_{\VV_\CC}(x)^{\perp}$, where $\mathcal{T}_{\VV_\CC}(x)$ is the tangent space of $\VV_\CC$ at $x$. 

For any generic point $y$ in $\CC^n$, the number of ED critical points of $y$ with respect to $\VV$ is a  constant; see \cite{DHOST} called the 
{\em EDdegree} of $\VV$ and denoted by ${\rm EDdegree}(\VV)$. 
} 
\end{definition}

%As before, the orthogonal complement $\perp$ is with respect to transpose and not conjugate transpose.
Here is a basic fact that will be needed later. 

\begin{lemma} \label{lem:generic data point gives generic critical points} Let $\VV \subseteq \RR^n$ be a variety and let
$\mathcal{W}$ be an open dense subset of $\VV_\CC$. Then all ED critical points of a generic $y \in \CC^n$ with respect to $\VV$ lie in $\mathcal{W}$. 
\end{lemma}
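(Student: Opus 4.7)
The plan is a standard incidence–variety / dimension–count argument built on the ``ED correspondence'' from \cite[\S 4]{DHOST}. Fix an irreducible decomposition $\VV_\CC=\bigcup_{i=1}^k \VV_i$, and for each $i$ set $d_i:=\dim \VV_i$. Define the (restricted) conormal set
\[
N\VV_i:=\{(x,y)\in \VV_i^{\rm reg}\times\CC^n \,:\, y-x\in \mathcal{T}_{\VV_i}(x)^{\perp}\}.
\]
The first projection $N\VV_i\to \VV_i^{\rm reg}$ has as fibre over $x$ the affine space $x+\mathcal{T}_{\VV_i}(x)^{\perp}$, which has dimension $n-d_i$; hence $N\VV_i$ is a constructible subset of $\CC^n\times\CC^n$ of pure dimension $n$. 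By definition, ED critical points of a data point $y$ with respect to $\VV$ are exactly the first coordinates of the fibre $\pi_2^{-1}(y)$, where $\pi_2\colon\bigsqcup_i N\VV_i\to \CC^n$ is the second projection.

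Next I would exploit that $\mathcal{W}$ is open dense in $\VV_\CC$. Since each $\VV_i$ is irreducible and $\mathcal{W}$ is dense in $\VV_\CC$, the intersection $\mathcal{W}\cap \VV_i$ must be nonempty and hence open dense in $\VV_i$. Thus the complement
\[
K_i:=\VV_i^{\rm reg}\setminus\mathcal{W}
\]
is contained in a proper closed subvariety of $\VV_i$, so $\dim K_i<d_i$. Restricting the conormal construction to $K_i$,
\[
NK_i:=\{(x,y)\in K_i\times\CC^n \,:\, y-x\in \mathcal{T}_{\VV_i}(x)^{\perp}\},
\]
the same fibre calculation over the first projection gives $\dim NK_i\le \dim K_i+(n-d_i)<n$.

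Finally I would argue that $\pi_2(NK_i)\subseteq\CC^n$ is a constructible set of dimension strictly less than $n$ (by Chevalley, since $NK_i$ itself is constructible and dimension cannot increase under a polynomial map). Therefore its Zariski closure is a proper subvariety of $\CC^n$. Taking the finite union
\[
\mathcal{B}:=\bigcup_{i=1}^k \overline{\pi_2(NK_i)},
\]
we obtain a proper closed subset of $\CC^n$. For any $y\in\CC^n\setminus\mathcal{B}$, no $(x,y)$ with $x\in K_i$ belongs to $N\VV_i$; equivalently, every ED critical point of $y$ with respect to $\VV$ lies in $\mathcal{W}$. Intersecting $\CC^n\setminus\mathcal{B}$ with the Zariski-open set of data points for which the number of ED critical points equals $\mathrm{EDdegree}(\VV)$ (as in \cite{DHOST}) produces the desired generic set.

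The main obstacle is not any single step but keeping the bookkeeping clean when $\VV_\CC$ is reducible and its components have different dimensions, so that the bound $\dim NK_i<n$ holds componentwise; once that reduction is in place, the rest is a routine application of the conormal construction and Chevalley's theorem on constructible images.
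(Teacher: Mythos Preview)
Your proposal is correct and follows essentially the same approach as the paper: both use the ED correspondence/conormal construction from \cite[\S 4]{DHOST}, observe that the first projection is an affine bundle of the right dimension, and conclude by a dimension count that the image under $\pi_2$ of the fibres over $\VV_\CC\setminus\mathcal{W}$ is contained in a proper subvariety of $\CC^n$. The only cosmetic difference is that the paper reduces to the irreducible case in one line and then states the dimension bound without further detail, whereas you keep the irreducible components explicit and spell out the bound $\dim NK_i\le \dim K_i+(n-d_i)<n$; the underlying argument is identical.
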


\begin{proof}
The proof is a dimensional argument explained in \cite{DHOST}. 
Without loss of generality assume that $\VV_\CC$ is irreducible. 
%\comment{need to justify or not?}
Consider the ED correspondence ${\mathcal E}_{\VV_\CC}$,
as defined in \cite[\S 4]{DHOST},
with its two projections $\pi_1$ on $\VV_\CC$ and $\pi_2$ on $\CC^{n\times t}$.
Since $\pi_1$ is an affine vector bundle over $\VV_\CC^{reg}$, it follows that
 $\pi_2\left(\pi_1^{-1}(\VV_\CC\setminus\mathcal{W})\right)$ has dimension smaller than $nt$.
\end{proof}

	\begin{remark}
%		\textcolor{Mahogany}{
			{\rm
				We mention in passing, that the ED degree of a variety $\VV$, as defined above equals the sum of the ED degrees of its irreducible components $\VV_i$, which coincides with the original definition of ED degree in \cite{DHOST}. This follows from Lemma~\ref{lem:generic data point gives generic critical points} by noting that the
				set  $\VV^{\rm reg}_{\CC}\cap (\VV_i)_{\CC}$ is an open dense subset of $(\VV_i)_{\CC}$ for each $i$. %make up an open dense subset of $\cup_i \VV^{\rm reg}$
				}
%				}
	\end{remark}	

We say that two matrices $X$ and $Y$ admit a {\em simultaneous algebraic SVD} if there exist orthogonal matrices $U\in O_{\CC}(n)$ and $V \in O_{\CC}(t)$ so that both $U^\top XV$ and $U^\top YV$ are diagonal matrices.
Our first main result is that every ED critical point $X$ of a generic matrix $Y \in \CC^{n \times t}$ with respect to an orthogonally invariant variety $\mathcal{M}$ admits a simultaneous algebraic SVD with $Y$. 

\begin{theorem}[Simultaneous SVD] \label{thm:diagonal critical point}
Fix an $O(n)\times O(t)$-invariant matrix variety $\mathcal{M}\subseteq\RR^{n\times t}$. 
Consider a matrix $Y\in\CC^{n\times t}$ so that the eigenvalues of $YY^\top $ are nonzero and distinct.
% for some orthogonal matrices $U\in O_{\CC}(n)$ and $V \in O_{\CC}(t)$, and a vector $d\in\CC^n$ such that 
%all $d_i$ are nonzero and the squares $d_i^2$ are distinct. 
Then any ED critical point $X$ of $Y$ with respect to $\MM$  admits a simultaneous algebraic SVD with $Y$.
\end{theorem}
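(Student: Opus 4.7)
My plan is to reduce $Y$ to diagonal form via its algebraic SVD, extract two symmetry relations from the $O_\CC(n)\times O_\CC(t)$-orbit tangent directions at $X$, and combine them into a single commutation relation that the spectral gap of $YY^\top$ resolves.

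First, Corollary~\ref{cor:algebraicSVD as we had} lets me write $Y = U_0\,\Diag(y)\,V_0^\top$ with $U_0 \in O_\CC(n)$, $V_0 \in O_\CC(t)$, and $y_1^2,\ldots,y_n^2$ the distinct nonzero eigenvalues of $YY^\top$. The complex bilinear pairing $\langle A,B\rangle = \tr(A^\top B)$ is invariant under the action $(A,B)\mapsto(UAV^\top, UBV^\top)$ for $U\in O_\CC(n)$, $V\in O_\CC(t)$, and by Theorem~\ref{thm:invariant varieties} the variety $\MM_\CC$ (hence also its regular locus $\MM_\CC^{\rm reg}$) is invariant under the same action. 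Consequently ED critical points are transported by this action, and after replacing $(X,Y)$ with $(U_0^\top X V_0,\,\Diag(y))$ it suffices to show that any ED critical point of $\Diag(y)$ with respect to $\MM_\CC$ must itself be diagonal.

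Next I would use the orbit tangent directions. For every skew $A \in \mathfrak{o}_\CC(n)$ and $B\in\mathfrak{o}_\CC(t)$, the curve $t\mapsto e^{tA} X e^{-tB}$ lies in $\MM_\CC$, so its velocity $AX-XB$ belongs to $\mathcal{T}_{\MM_\CC}(X)$. The critical-point equation $\langle Y-X,\, AX-XB\rangle = 0$, valid for all such $A$ and $B$, combined with the standard observation that $\tr(MN)=0$ for every skew $N$ forces $M$ to be symmetric, yields
\[
XY^\top = YX^\top \qquad\text{and}\qquad X^\top Y = Y^\top X.
\]
Multiplying the first relation on the right by $Y$ and substituting the second gives
\[
XY^\top Y \;=\; YX^\top Y \;=\; Y(Y^\top X) \;=\; YY^\top X.
\]
Since $YY^\top = \Diag(y_1^2,\ldots,y_n^2)$ is an $n\times n$ diagonal matrix with pairwise distinct nonzero entries, reading this equation entry-wise produces $(y_i^2-y_j^2)X_{ij} = 0$ for $i,j \leq n$ and $y_i^2 X_{ij} = 0$ for $j > n$; each of these forces the corresponding off-diagonal entry of $X$ to vanish. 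Thus $X$ is diagonal, and $(U_0, V_0)$ delivers the required simultaneous algebraic SVD of the original $X$ and $Y$.

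The step that I expect to require the most care is the initial reduction, where I must verify simultaneously that the $O_\CC(n)\times O_\CC(t)$-action preserves the complex bilinear pairing, preserves $\MM_\CC^{\rm reg}$ (so that $X$ remains a regular point), and acts compatibly on Zariski tangent spaces, so that the ED critical-point property is genuinely transported. Once this bookkeeping is in place, the derivation of the two symmetry relations is a short Lie-algebra computation and the concluding eigenvalue-separation step is immediate.
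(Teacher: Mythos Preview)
Your proposal is correct and follows essentially the same route as the paper: reduce $Y$ to diagonal form via its algebraic SVD, use the orbit directions of the $O_\CC(n)\times O_\CC(t)$-action to obtain that $XY^\top$ and $Y^\top X$ are symmetric, and then exploit the distinct nonzero values $y_i^2$ to force $X$ diagonal. The only cosmetic difference is the final step: the paper records the two symmetry conditions and invokes a separate lemma (Lemma~\ref{lem:diagonal}) to deduce diagonality of $X$, whereas you combine them into the single commutation relation $X\,Y^\top Y = Y Y^\top\, X$ and read off $X_{ij}=0$ directly, which is a slight streamlining of the same argument.
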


The proof of this theorem relies on the following three  lemmas.

\begin{lemma} \label{lem:tangent space of On}
The tangent space of $O_{\CC}(n)$ at a point $U \in O_{\CC}(n)$ is 
\begin{align*}
\TT_{O_{\CC}(n)}(U) & = \{ZU \ :  \  \text{$Z\in \CC^{n\times n}$ is skew-symmetric}\}\\
&= \{UZ \ :  \  \text{$Z\in \CC^{n\times n}$ is skew-symmetric}\}.
\end{align*}
\end{lemma}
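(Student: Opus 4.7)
The plan is to compute the Zariski tangent space directly from the defining equations of $O_\CC(n)$, and then verify via a dimension count that the linearization captures all of it.

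First, $O_\CC(n)$ is cut out by the matrix equation $Q^\top Q = I$, that is by the $\binom{n+1}{2}$ polynomial equations given by the on-and-above-diagonal entries of $Q^\top Q - I$. Any tangent vector $W\in \TT_{O_\CC(n)}(U)$ must lie in the kernel of the Jacobian of these equations at $U$, which is the linear map $L: \CC^{n\times n}\to \mathrm{Sym}_n(\CC)$ given by $L(W)=W^\top U+U^\top W$. Since $U\in O_\CC(n)$ satisfies $U^{-1}=U^\top$, the equation $L(W)=0$ is equivalent to $Z:=U^\top W$ being skew-symmetric, i.e.\ $W=UZ$ with $Z$ skew. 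This shows the inclusion $\TT_{O_\CC(n)}(U)\subseteq \{UZ : Z\in\CC^{n\times n}\ \text{skew}\}$.

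Next I would close this to an equality via a dimension count. The kernel of $L$ has dimension $\binom{n}{2}$, so the image of $L$ has dimension $n^2-\binom{n}{2}=\binom{n+1}{2}$, meaning $L$ is surjective onto $\mathrm{Sym}_n(\CC)$. By the Jacobian criterion, $U$ is a smooth point of $O_\CC(n)$ and the local dimension of $O_\CC(n)$ at $U$ equals $\binom{n}{2}$. The right-hand side $\{UZ : Z\ \text{skew}\}$ also has dimension $\binom{n}{2}$ (left-multiplication by the invertible $U$ is an isomorphism on $\CC^{n\times n}$), so the inclusion above is an equality.

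Finally, to identify the two presentations, given $W=UZ$ with $Z$ skew, set $Z':=UZU^\top$; then $Z'U = UZ = W$, and $(Z')^\top = UZ^\top U^\top = -UZU^\top = -Z'$, so $Z'$ is skew. The reverse passage $W=Z'U\mapsto UZ$ with $Z=U^\top Z'U$ is symmetric. Hence $\{UZ:Z\ \text{skew}\}=\{ZU:Z\ \text{skew}\}$, completing the proof.

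The one step that deserves care is the dimension argument: without it, the differentiation of $Q^\top Q=I$ only gives a containment, and one needs the Jacobian to be surjective in order to conclude that the Zariski tangent space coincides with the explicit linear space $\{UZ:Z\ \text{skew}\}$. Everything else is a direct calculation.
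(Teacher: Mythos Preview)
Your proof is correct and takes essentially the same approach as the paper: differentiate the defining equation of $O_\CC(n)$ and identify the kernel of the resulting linear map with $\{UZ:Z\text{ skew}\}$. The only differences are cosmetic: the paper starts from $WW^\top=I$ (yielding $\{ZU\}$ first) and simply asserts that the tangent space equals the kernel of $\nabla F(U)$, whereas you supply the Jacobian--rank dimension count explicitly; and the paper obtains the second description by differentiating $W^\top W=I$ instead, while you pass between the two descriptions via the conjugation $Z\mapsto UZU^\top$.
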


\begin{proof}
Recall $O_{\CC}(n) = \{ W \in \CC^{n \times n} \,:\, WW^\top  = I \}$.
Consider the map $F: \CC^{n\times n}\ra \CC^{n\times n}$ given by $W\mapsto WW^\top $. 
Note that for any $W,B\in \CC^{n\times n}$ and $t\in \RR$, one has 
$$
(W+tB)(W+tB)^\top  = WW^\top  + t(WB^\top  + BW^\top ) + t^2 BB^\top .
$$
Hence given  $U\in O_{\CC}(n)$, we have
$[\nabla F(U)](B) = UB^\top  + BU^\top $.
The tangent space $\TT_{O_\CC(n)}(U)$ is the kernel of the linear map $\nabla F(U)$.
Consider the matrix $Z:=BU^\top $. 
Then $[\nabla F(U)](B) = 0$ if and only if 
$Z^\top  + Z = 0$ which means $Z$ is skew-symmetric. This proves the first description of $\TT_{O_\CC(n)}(U)$.
The second description follows by considering the map $W\mapsto W^\top  W$ instead of $F$.
\end{proof}

\begin{lemma} \label{lem:symmetric}
A matrix $A\in \CC^{n\times n}$ is symmetric if and only if 
${\rm trace}(AZ)=0$ for any skew-symmetric matrix $Z\in \CC^{n\times n}$.
\end{lemma}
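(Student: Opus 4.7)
The plan is to verify both implications directly; neither direction should present a genuine obstacle since this is essentially a duality statement between the space of symmetric and skew-symmetric matrices under the bilinear pairing $(X,Y)\mapsto \mathrm{trace}(XY)$.

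For the forward implication, I would fix a symmetric $A$ and a skew-symmetric $Z$, and exploit the identity $\mathrm{trace}(M)=\mathrm{trace}(M^\top)$. Writing
\[
\mathrm{trace}(AZ)=\mathrm{trace}((AZ)^\top)=\mathrm{trace}(Z^\top A^\top)=\mathrm{trace}(-ZA)=-\mathrm{trace}(AZ),
\]
forces $\mathrm{trace}(AZ)=0$, using $A^\top=A$ and $Z^\top=-Z$ together with the cyclic property of trace.

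For the converse, the cleanest route is to test the hypothesis against the standard basis of skew-symmetric matrices. For any pair of indices $p\neq q$, the matrix $Z_{pq}:=E_{pq}-E_{qp}$ is skew-symmetric (here $E_{ij}$ denotes the matrix with a $1$ in position $(i,j)$ and zeros elsewhere). A direct computation using $\mathrm{trace}(AZ)=\sum_{i,j} A_{ij}Z_{ji}$ gives $\mathrm{trace}(AZ_{pq})=A_{qp}-A_{pq}$. The hypothesis therefore forces $A_{pq}=A_{qp}$ for every $p\neq q$, which is exactly the assertion that $A$ is symmetric.

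I do not anticipate any real obstacle here; the only mild subtlety is remembering to work over $\CC$ rather than $\RR$, but since the whole argument is purely algebraic (no inner-product structure, only the bilinear trace pairing) the field extension changes nothing. One could alternatively decompose $A=\tfrac{1}{2}(A+A^\top)+\tfrac{1}{2}(A-A^\top)$ and argue that the skew part is $\mathrm{trace}$-orthogonal to every skew matrix, but over $\CC$ this would require an extra nondegeneracy step, so the basis-testing approach above is cleaner.
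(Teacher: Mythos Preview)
Your proof is correct and matches the paper's approach almost exactly: the converse direction in both proofs tests against the elementary skew-symmetric matrices $E^{ij}-E^{ji}$ to extract $A_{ji}-A_{ij}$. For the forward direction the paper simply invokes linearity over this basis, whereas you use the transpose-of-trace identity, but this is a cosmetic rather than substantive difference.
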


\begin{proof}
The ``if" part follows because $A_{ij} - A_{ji} = {\rm trace}(A (E^{ij} - E^{ji}))$ 
where $E^{ij}$ denotes the $n\times n$ matrix whose $(i,j)$-entry is one and all other entries are zero. 
The ``only if" part follows by the same reasoning since $\{E^{ij} - E^{ji}\}$ is a basis for the 
space of skew-symmetric matrices.
\end{proof}

\begin{lemma} \label{lem:diagonal}
Consider a matrix $A\in \CC^{n\times t}$ and a diagonal matrix
$D\in \CC^{n\times t}$ with nonzero diagonal entries $d_i$ such that the squares $d_i^2$ are distinct.
Then if $AD^\top $ and $D^\top  A$ are both symmetric, the matrix $A$ must be diagonal.
\end{lemma}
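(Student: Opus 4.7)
The plan is a direct component-wise computation, exploiting the fact that $D$ has shape $n \times t$ (so $D^\top A$ lives in $\CC^{t \times t}$ and contains extra information about the last $t-n$ columns of $A$).

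First I would write out the two products entry-wise. Writing $D_{ii} = d_i$ for $i = 1,\ldots,n$ (and zero elsewhere), one checks that
\[
(AD^\top)_{ij} = d_j A_{ij} \qquad (i,j \in \{1,\ldots,n\}),
\]
and, for $i,j \in \{1,\ldots,t\}$,
\[
(D^\top A)_{ij} = \begin{cases} d_i A_{ij} & \text{if } i \leq n,\\ 0 & \text{if } i > n. \end{cases}
\]
Then the symmetry of $D^\top A$ gives two pieces of information. For $i \leq n$ and $j > n$ it yields $d_i A_{ij} = 0$, and since $d_i \neq 0$ this forces $A_{ij} = 0$. In other words, the last $t-n$ columns of $A$ vanish, so $A$ is already ``block-diagonal'' in the sense that only its leading $n \times n$ block can be nonzero.

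It remains to show that this leading $n \times n$ block is diagonal. For indices $i,j \in \{1,\ldots,n\}$ the symmetry of $D^\top A$ reads $d_i A_{ij} = d_j A_{ji}$, and the symmetry of $AD^\top$ reads $d_j A_{ij} = d_i A_{ji}$. Treating these as a $2 \times 2$ linear system in $(A_{ij}, A_{ji})$, or equivalently eliminating $A_{ji}$, I get $(d_i^2 - d_j^2)\, d_j A_{ij} = 0$. Because the $d_k$ are nonzero and the squares $d_k^2$ are pairwise distinct, this forces $A_{ij} = 0$ whenever $i \neq j$.

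There is no real obstacle here; the only subtlety is remembering that one cannot conclude $A$ is diagonal from the symmetry of $AD^\top$ alone (that relation is vacuous on indices involving $j > n$), so the symmetry of $D^\top A$ is genuinely needed to kill the last $t-n$ columns. The hypothesis that both $d_i \neq 0$ and $d_i^2 \neq d_j^2$ is also essential: the first rules out degeneracy in the row-by-row argument on the extra columns, and the second is precisely what makes the $2\times 2$ system above nondegenerate off the diagonal.
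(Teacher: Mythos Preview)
Your proof is correct and essentially identical to the paper's own argument: both write out the two symmetry conditions entry-wise, use $d_iA_{ij}=0$ for $j>n$ to kill the last $t-n$ columns, and combine $d_iA_{ij}=d_jA_{ji}$ with $d_jA_{ij}=d_iA_{ji}$ to force $A_{ij}=0$ for $i\neq j$ in the leading $n\times n$ block. The only cosmetic difference is the order in which the two pieces are handled.
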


\begin{proof}
The symmetry of $AD^\top $ means $A_{ij}d_j = A_{ji}d_i$ for any $i,j=1,\ldots,n$.
In addition, the symmetry of $D^\top  A$ implies  
$A_{ij}d_i = A_{ji} d_j$ for all $i,j=1,\ldots,n$ and 
$A_{ij}d_i = 0$ for any $i=1,\ldots,n$ and $j>n$.
Therefore for any $i,j$, one has 
$$
A_{ij}d_id_j = A_{ji} d_i^2 \qquad \textrm{ and }\qquad A_{ij}d_id_j= A_{ji} d_j^2.
$$
Since $d_i^2\neq d_j^2$ for all $i\neq j$, we get 
$A_{ij}=0$ for all $i \neq j$, $i,j=1,\ldots,n$. 
Since the $d_i$'s are all nonzero and $A_{ij}d_i = 0$ for any $i=1,\ldots,n$ and $j>n$, 
we have $A_{ij} = 0$ for any $i = 1,\ldots,n$ and $j>n$. Thus $A$ is diagonal.
\end{proof}

\begin{remark}{\rm  The assumption $d_i^2\neq d_j^2$ for $i\neq j$ is necessary in Lemma \ref{lem:diagonal}. For example consider $D=I$
and the symmetric matrices
$$A=\begin{pmatrix}\cos\theta&-\sin\theta\\
-\sin\theta&-\cos\theta\end{pmatrix}\in O(2), \,\,\,\theta\in \RR $$
for which 
$AD^\top $ and $D^\top  A$ are both symmetric. However, $A$ is diagonal only when $\theta=k\pi$ with $k\in{\mathbb Z}$.}
\end{remark}

\noindent{\em Proof of Theorem~\ref{thm:diagonal critical point}.}
By Corollary~\ref{cor:algebraicSVD as we had}, we may write $Y = UDV^\top $  for some $U\in O_\CC(n)$, $V\in O_\CC(t)$, and a diagonal matrix $D\in \C^{n\times t}$.
Let $X$ be an ED critical point of $Y$ with respect to $\MM$. 
Then $A:= U^\top  X V$ lies in $\MM_\CC$ (Theorem~\ref{thm:invariant varieties}).
To prove the theorem, we  need to show that $A\in \CC^{n\times t}$ is diagonal. 

Consider the map $F: O_{\CC}(n) \ra \mathcal{M}_\CC$ given by $W\mapsto WAV^\top $. 
Then 
$$
[\nabla F(U)](B) = BAV^\top  \in \TT_{\mathcal{M}_\CC}(X),
$$
for any $B\in \TT_{O_{\CC}(n)}(U)$.
By Lemma \ref{lem:tangent space of On}, we may write $B = UZ$ for a skew-symmetric $Z$, yielding
$UZAV^\top  \in \TT_{\mathcal{M}_\CC}(X)$. Varying $B$, we see that 
the tangent space of $\mathcal{M}_\CC$ at $X$ contains $\{UZAV^\top  \ : \  Z^\top  = -Z\}$. 
Then, by the definition of ED critical point we have ${\rm trace}((Y-X) (UZAV^\top )^\top )= 0$ 
for any skew-symmetric matrix $Z$, and hence
$$
0 = {\rm trace}(U(D-A)V^\top  VA^\top  Z^\top  U^\top ) = {\rm trace}((D-A)A^\top  Z^\top ). 
$$
 By Lemma \ref{lem:symmetric}, this means $(D-A)A^\top $ is symmetric.
Since $AA^\top $ is symmetric, we have that $DA^\top $ is symmetric; therefore the transpose $AD^\top $ is symmetric.

By considering $F \,:\, O_{\CC}(t) \ra \mathcal{M}_\CC$ given by $W \mapsto UAW^\top $, we get as above, that 
$\{UAZ^\top  V^\top  \ : \  Z^\top  = -Z\} \subseteq \TT_{\MM_\CC}(X)$.  It follows that  
$$
0 = {\rm trace}((U(D-A)V^\top )^\top  UA Z^\top  V^\top ) = {\rm trace}((D-A)^\top  A Z^\top )
$$
for any skew-symmetric matrix $Z$, and by Lemma \ref{lem:symmetric}, $(D-A)^\top  A$ is symmetric.
Again, since $A^\top  A$ is symmetric, we get that $D^\top  A$ is symmetric.
Since $AD^\top $ and $D^\top  A$ are both symmetric, we conclude $A$ is diagonal  by Lemma \ref{lem:diagonal}, as claimed.
\qed

\medskip
The next ingredient in our development is a version of Sard's Theorem in algebraic geometry (often 
called ``generic smoothness" in textbooks); see  \cite[III, Corollary 10.7]{hartshorne1977algebraic-geometry}. 

\begin{lemma}[Generic smoothness on the target] \label{sard theorem}
Let $\VV$ and $\mathcal{W}$ be  varieties over $\CC$. 
Consider a polynomial map $f: \VV\ra \mathcal{W}$. Then  there is an open dense subset $\mathcal{W}'$ of 
$\mathcal{W}^{\textup{reg}}$ (and hence of $\mathcal{W}$) such that for any $w\in \mathcal{W}'$ and any point $v\in \mathcal{V}^{\textup{reg}}\cap f^{-1}(w)$, the linear map 
$\nabla f(x)\colon \mathcal{T}_{\VV}(x)\to \mathcal{T}_{\mathcal{W}}(f(x))$ is surjective. 
\end{lemma}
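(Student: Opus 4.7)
The plan is to reduce the lemma to the classical algebraic Sard theorem (generic smoothness on the target, in characteristic zero) while accommodating the fact that $\VV$ and $\mathcal{W}$ need be neither irreducible nor smooth. First, I would decompose $\VV=\bigcup_{i}\VV_{i}$ and $\mathcal{W}=\bigcup_{j}\mathcal{W}_{j}$ into irreducible components. Since $\VV_{i}$ is irreducible, $\overline{f(\VV_{i})}$ is irreducible and therefore contained in a single component $\mathcal{W}_{j(i)}$. A regular point $v\in\VV^{\textup{reg}}$ lies on exactly one $\VV_{i}$, is a smooth point of $\VV_{i}$, and satisfies $\mathcal{T}_{\VV}(v)=\mathcal{T}_{\VV_{i}}(v)$; the analogous statement holds at points of $\mathcal{W}^{\textup{reg}}$. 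It therefore suffices to establish the conclusion for each restriction $f_{i}\colon\VV_{i}\to\mathcal{W}_{j(i)}$ separately, pull back each resulting $\mathcal{W}'_{i}$ to an open dense subset of $\mathcal{W}^{\textup{reg}}$ (by adjoining the open set $\mathcal{W}^{\textup{reg}}\setminus\mathcal{W}_{j(i)}$ so that the combined set remains dense in $\mathcal{W}^{\textup{reg}}$), and finally intersect over the finite index set $\{i\}$.

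From here I assume $\VV$ and $\mathcal{W}$ are both irreducible. If $f$ is not dominant, setting $\mathcal{W}':=\mathcal{W}^{\textup{reg}}\setminus\overline{f(\VV)}$ produces an open dense subset over which every fiber is empty, so the conclusion is vacuous. Otherwise, I restrict $f$ to the smooth locus: the map $f|_{\VV^{\textup{reg}}}\colon\VV^{\textup{reg}}\to\mathcal{W}$ now has a nonsingular source, so the classical generic smoothness theorem \cite[III, Corollary 10.7]{hartshorne1977algebraic-geometry} produces an open dense $\mathcal{W}''\subseteq\mathcal{W}$ such that the restriction of $f$ to $f^{-1}(\mathcal{W}'')\cap\VV^{\textup{reg}}$ is a smooth morphism onto $\mathcal{W}''$. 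By definition, smoothness of the morphism at $v$ entails that $\nabla f(v)$ is surjective (with kernel of the expected relative dimension), so $\mathcal{W}':=\mathcal{W}''\cap\mathcal{W}^{\textup{reg}}$ is the sought-after open dense subset.

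The main subtle point, and the reason the result is characteristic-dependent, is the word ``any'' in the conclusion: I need $\nabla f(v)$ to be surjective at \emph{every} $v\in\VV^{\textup{reg}}\cap f^{-1}(w)$, not merely at some chosen preimage. Equivalently, the whole critical locus $Z:=\{v\in\VV^{\textup{reg}}\colon \mathrm{rank}\,\nabla f(v)<\dim\mathcal{W}\}$ must map into a proper subvariety of $\mathcal{W}$, and it is precisely this uniform statement that generic smoothness delivers in characteristic zero. The analogous assertion can fail in positive characteristic (Frobenius-type examples), but we work over $\CC$, so this obstacle does not arise. The only bookkeeping to be careful with is the reducible case, where one must ensure that the chosen $\mathcal{W}'$ avoids the image of $Z$ uniformly across all pairs of components $(\VV_i,\mathcal{W}_{j(i)})$; the finite intersection of open dense sets handled above accomplishes exactly this.
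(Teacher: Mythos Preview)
Your argument is correct, and in fact it supplies more than the paper does: the paper states this lemma without proof, simply citing \cite[III, Corollary~10.7]{hartshorne1977algebraic-geometry} as the source. Your reduction of the possibly reducible, possibly singular case to Hartshorne's statement (decompose into components, dispose of non-dominant maps, restrict to the smooth source, then take a finite intersection of the resulting dense opens in $\mathcal{W}^{\textup{reg}}$) is exactly the bookkeeping one needs to make that citation rigorous, so your approach and the paper's are the same modulo the level of detail.
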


We now establish a key technical result: a representation of the tangent space of $\MM_\CC$ 
at a generic matrix $X\in \MM_\CC$ in terms of the tangent space of $S_\CC$ at the vector of algebraic singular values of $X$. 

\begin{lemma}[Transfer of tangent spaces] \label{transfer of tangent spaces}
Consider a $\Pi^{\pm}_n$-invariant variety $S \subseteq \RR^n$ and the induced real variety $\MM := \sigma^{-1}(S)$.
Then the following statements hold.
\begin{enumerate}[label=({\alph*}), leftmargin=0.7cm]
\item \label{transfer of tangent spaces: part 1}
A generic point $X\in \MM_\CC$ lies in $\MM_\CC^{\textup{reg}}$, 
admits an algebraic SVD, and its vector of algebraic singular values lies in $S_\CC^{\rm reg}$.
Moreover, the tangent space $\TT_{\MM_\CC}(X)$ admits the representation
\begin{equation} \label{eqn:tangent space}
\TT_{\MM_\CC}(X)= \lt\{\begin{array}{c}
UZ_1\, \Diag(x)\, V^\top +U\, \Diag(x)\, Z_2^\top  V^\top  + U\, \Diag(a)\, V^\top  \ : \  \\
 a\in \mathcal{T}_{S_\CC}(x), \  Z_1, \ Z_2 \textrm{ are skew-symmetric}
\end{array}
 \rt\},
\end{equation}
 for any $U\in O_\CC(n)$, $V\in O_\CC(t)$, and $x\in S_\CC^{\rm reg}$ satisfying $X = U \, \Diag(x) \, V^\top $.

\item\label{it:claim_gen}
A generic point $x\in S_\CC$ lies in $S_\CC^{\textup{reg}}$. Moreover, for any $U\in O_\CC(n), V\in O_\CC(t)$, 
the point $X= U\,\Diag(x)\, V^\top $ lies in $\MM_\CC^{\textup{reg}}$, and satisfies \eqref{eqn:tangent space}.
\end{enumerate}
\end{lemma}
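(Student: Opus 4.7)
The plan is to centrally analyze the polynomial map
$$
P \colon O_\CC(n)\times S_\CC\times O_\CC(t)\longrightarrow \MM_\CC,\qquad P(U,x,V)=U\,\Diag(x)\,V^\top,
$$
whose image is dense in $\MM_\CC$ by Theorem~\ref{thm:generic points in M_C}. For part~\ref{transfer of tangent spaces: part 1}, the first move is to intersect three open dense subsets of $\MM_\CC$: the smooth locus $\MM_\CC^{\rm reg}$; the locus where $XX^\top$ has $n$ distinct nonzero eigenvalues, a nonempty open set by Corollary~\ref{cor:algebraicSVD as we had}; and the set $\mathcal{N}_{S_\CC^{\rm reg}}$, which contains an open dense subset of $\MM_\CC$ by Theorem~\ref{thm:generic points in M_C} applied with $Q=S_\CC^{\rm reg}$. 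A generic $X\in\MM_\CC$ then lies in all three, so it is regular, admits an algebraic SVD, and can be factored as $X=U\Diag(x)V^\top$ with $x\in S_\CC^{\rm reg}$. Since distinct nonzero eigenvalues of $XX^\top$ force the algebraic SVD to be unique up to signed permutation, and $S_\CC^{\rm reg}$ is $\Pi_n^\pm$-invariant, the vector of algebraic singular values of $X$ also lies in $S_\CC^{\rm reg}$.

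To obtain~\eqref{eqn:tangent space}, I differentiate $P$ at $(U,x,V)$. By Lemma~\ref{lem:tangent space of On}, tangent vectors take the form $(UZ_1,a,VZ_2)$ with $Z_1,Z_2$ skew-symmetric and $a\in \TT_{S_\CC}(x)$, and a direct computation gives
$$
\nabla P(U,x,V)(UZ_1,a,VZ_2)=UZ_1\Diag(x)V^\top+U\Diag(x)Z_2^\top V^\top+U\Diag(a)V^\top,
$$
whose image as $Z_1,Z_2,a$ vary is precisely the right-hand side of~\eqref{eqn:tangent space}. The inclusion of this image in $\TT_{\MM_\CC}(X)$ is automatic. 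To upgrade it to equality, I apply generic smoothness (Lemma~\ref{sard theorem}) to the restriction of $P$ to the smooth locus $O_\CC(n)\times S_\CC^{\rm reg}\times O_\CC(t)$, producing a further open dense subset of $\MM_\CC^{\rm reg}$ on which $\nabla P$ is surjective at every preimage. Intersecting with the set from the preceding paragraph completes part~\ref{transfer of tangent spaces: part 1}.

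For part~\ref{it:claim_gen}, the condition $x\in S_\CC^{\rm reg}$ is immediately generic. Let $\mathcal{G}\subseteq \MM_\CC$ denote the open dense set produced in part~\ref{transfer of tangent spaces: part 1}, and let $\mathcal{G}^{\rm inv}:=O_\CC(n)\cdot\mathcal{G}\cdot O_\CC(t)^\top$, which is open (as a union of opens), dense, and $O_\CC(n)\times O_\CC(t)$-invariant. I claim $\Diag^{-1}(\mathcal{G}^{\rm inv})$ is open dense in $S_\CC$; arguing by contradiction, suppose some irreducible component $S_{\CC,j}$ of $S_\CC$ satisfies $\Diag(S_{\CC,j})\cap\mathcal{G}^{\rm inv}=\emptyset$. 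By invariance of $\mathcal{G}^{\rm inv}$, this forces $P(O_\CC(n)\times S_{\CC,j}\times O_\CC(t))\cap\mathcal{G}^{\rm inv}=\emptyset$. But as argued in Section~\ref{subsec:components} (via Theorem~\ref{thm:union_comp} applied to the $\Pi_n^\pm$-symmetrization of $S_{\CC,j}$), the Zariski closure of this image is a union of whole irreducible components of $\MM_\CC$, each of which meets the open dense set $\mathcal{G}^{\rm inv}$, a contradiction. Consequently $\Diag(x)\in\mathcal{G}^{\rm inv}\subseteq \MM_\CC^{\rm reg}$ for generic $x$, and invariance then yields $U\Diag(x)V^\top\in\MM_\CC^{\rm reg}$ for every $(U,V)$.

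The tangent space formula at an arbitrary factorization $(U,V)$ of such an $X=U\Diag(x)V^\top$ follows from the equivariance $P(U_0U,x,V_0V) = U_0P(U,x,V)V_0^\top$, which makes surjectivity of $\nabla P$ stable under the action of $O_\CC(n)\times O_\CC(t)$ on the outer factors of the domain. For generic $x$, since $\Diag(x)\in \mathcal{G}^{\rm inv}$, there exist $(U_0,V_0)$ with $U_0\Diag(x)V_0^\top\in \mathcal{G}$; part~\ref{transfer of tangent spaces: part 1} gives surjectivity of $\nabla P$ at the factorization $(U_0,x,V_0)$, and equivariance transfers surjectivity to every $(U,x,V)$, yielding~\eqref{eqn:tangent space}. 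The main obstacle is the density claim in paragraph three, which in turn hinges on the component correspondence established in Section~\ref{subsec:components}.
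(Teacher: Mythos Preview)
Your overall approach matches the paper's: both parts rely on the parametrization $P$, generic smoothness (Lemma~\ref{sard theorem}) for the tangent space formula, and the component correspondence of Section~\ref{subsec:components} (Theorem~\ref{thm:union_comp}) to transfer density from $\MM_\CC$ back to $S_\CC$ in part~\ref{it:claim_gen}.

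There is, however, a false step in your treatment of part~\ref{transfer of tangent spaces: part 1}. You assert that the locus $\{X\in\MM_\CC: XX^\top \text{ has } n \text{ distinct nonzero eigenvalues}\}$ is a nonempty open subset of $\MM_\CC$, citing Corollary~\ref{cor:algebraicSVD as we had}. That corollary says nothing about nonemptiness inside $\MM_\CC$, and in fact the claim fails in general: for the essential variety $\EE$ (Example~\ref{ex:essential variety}), Corollary~\ref{eigenvalues in the closure} forces the eigenvalues of $XX^\top$ to be of the form $(a^2,a^2,0)$ for every $X\in\EE_\CC$, so this locus is empty.

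Fortunately the step is unnecessary. For any algebraic SVD $X=U\,\Diag(x)\,V^\top$ one has $XX^\top=U\,\Diag(x_1^2,\ldots,x_n^2)\,U^\top$, so the multiset $\{x_i^2\}$ always coincides with the multiset of eigenvalues of $XX^\top$; hence $x$ is determined by $X$ up to a signed permutation, with no distinctness hypothesis needed. Since $S_\CC^{\rm reg}$ is $\Pi_n^\pm$-invariant, the existence of \emph{one} factorization with $x\in S_\CC^{\rm reg}$---which you already obtain from Theorem~\ref{thm:generic points in M_C} with $Q=S_\CC^{\rm reg}$---forces every factorization to have this property. With that correction your argument goes through, and the remainder, including the density argument for $\Diag^{-1}(\mathcal{G}^{\rm inv})$ in part~\ref{it:claim_gen} via Theorem~\ref{thm:union_comp}, is essentially the paper's proof.
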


\begin{proof}
%\begin{enumerate}[leftmargin=0.7cm]
%\item
We begin by proving claim $\ref{transfer of tangent spaces: part 1}$.
By Theorem~\ref{thm:generic points in M_C} with $Q = S_\CC^{\rm reg}$,
a generic point $X\in \MM_\CC$ 
admits an algebraic SVD: $X = U' \, \Diag(x') \, V'^\top $ for some $x'\in S_\CC^{\rm reg}$. 
As $\mathcal{M}_\CC^{\rm reg}$ is an open dense subset of $\mathcal{M}_\CC$, we can assume that $X$ lies in $\mathcal{M}_\CC^{\rm reg}$. 
Consider the  polynomial map
$P: O_\CC(n) \times S_\CC \times O_\CC(t) \ra \mathcal{M}_\CC$ 
given by
$$
P(\wt{U},\wt{x},\wt{V}) := \wt{U} \, \Diag(\wt{x})\, \wt{V}^\top .
$$
By Lemma~\ref{sard theorem} we can assume that $\nabla P(U,x,V)$ is surjective 
whenever we can write $X=U \, \Diag(x) \,V^\top $ for some 
$U\in O_\CC(n)$, $V\in O_\CC(t)$ and $x\in S_\CC^{\rm reg}$. 
Therefore the description of tangent space in 
\eqref{eqn:tangent space} follows from Leibniz rule on $P$ and Lemma~\ref{lem:tangent space of On}. Hence claim \ref{transfer of tangent spaces: part 1} is proved. 

	Next, we argue claim \ref{it:claim_gen}. To this end, let $\Theta$ be the dense open subset of $\MM_{\CC}$ guaranteed to exist by \ref{transfer of tangent spaces: part 1}. We claim that we can assume that $\Theta$ is in addition orthogonally invariant. To see this, observe that all the claimed properties in \ref{transfer of tangent spaces: part 1} continue to hold on the dense, orthogonally invariant subset 
	$\Gamma:=\bigcup\{U\Theta V^T: U\in O_{\CC}(n),\, V\in  O_{\CC}(t)\}$ of $\MM_{\CC}$. 
	By Lemma~\ref{lem:closure is Ginvariant}, the set $\overline{\mathcal{M}_{\CC}\setminus\Gamma}$ is an orthogonally invariant variety. 
	%Chevalley's theorem \cite[Theorem 3.16]{harris} implies that $\Gamma$ contains some open dense subset $H$ of $\MM_{\CC}$, 
	Note now the inclusions  
	$\Theta\subseteq \mathcal{M}_{\CC}\setminus (\overline{\mathcal{M}_{\CC}\setminus\Gamma})\subseteq\Gamma$.
		It follows that  $\mathcal{M}_{\CC}\setminus (\overline{\mathcal{M}_{\CC}\setminus\Gamma})$ is
		an orthogonally invariant, open, dense variety in $\MM_{\CC}$
on which all the properties in \ref{transfer of tangent spaces: part 1} hold. Replacing $\Theta$ with $\mathcal{M}_{\CC}\setminus (\overline{\mathcal{M}_{\CC}\setminus\Gamma})$, we may assume that $\Theta$ is indeed orthogonally invariant in the first place.

	Next, we appeal to some results of Section~\ref{subsec:components}.
	Let $\{S_i\}^k_{i=1}$ be the irreducible components of $S$ and define the symmetrizations $S^{\pi}_i:=\bigcup_{\pi \in \Pi^{\pm}_n} \pi S_i$ and the varieties $\mathcal{M}_i:=\sigma^{-1}(S^{\pi}_i)$.
%	Note that any signed permutation maps any component  of $S$ to another components of $S$. %Recall that if $S^{\pi}_i$ is contained in some $S^{\pi}_j$, then $S^{\pi}_i$ and $S^{\pi}_j$ must actually coincide; the analogous statement holds for $\mathcal{M}_i$ and $\mathcal{M}_j$. 
	%{\color{green} Giorgio: Notation to be fixed because $S^{\pi}_i$ may be equal to another $S^{\pi}_j$}
	 Observe that $\overline{S_i}$ are the irreducible components of $S_{\CC}$ and we have $\overline{S^{\pi}_i}=\bigcup_{\pi \in \Pi^{\pm}_n} \pi \overline{S_i}$. Note also that $\mathcal{M}_{\CC}$ is the union of the varieties $\overline{\mathcal{M}_{i}}$.
	 	 \smallskip
	 	 
	 By Theorem~\ref{thm:union_comp}, 	 
 each variety $\overline{\MM_i}$ is a union of some irreducible components of $\MM_{\CC}$.
	Since the intersection of $\mathcal{M}^{\textup{reg}}_{\CC}$ with any irreducible component of  $\mathcal{M}_{\CC}$ is open and dense in that component, we deduce that the intersection $\overline{\mathcal{M}_{i}}\cap\mathcal{M}^{\textup{reg}}_{\CC}$ is an open dense subset of $\overline{\mathcal{M}_{i}}$ for each index $i$.
	%Suppose first $S^{\pi}_1$ coincides with all of $S$.
	%Note it is possible that $S^{\pi}_i=S^{\pi}_j$ for some distinct $i$,$j$. 
	%is then an open, dense set, on which all the claimed properties \ref{transfer of tangent spaces: part 1} hol
 Similarly, the intersection $\Theta\cap \overline{\mathcal{M}_i}$ is open and dense in each variety $\overline{\mathcal{M}_i}$.
	Then clearly $\Theta$ intersects $\mathcal{N}_{\overline{S^{\pi}_{i}}}$ for each index  $i$, since by Theorem~\ref{thm:generic points in M_C} the set $\mathcal{N}_{\overline{S^{\pi}_{i}}}$ contains an open dense subset of $\overline{\mathcal{M}_i}$.
	Therefore for each index $i$, the set $\Theta$ contains $\Diag(x_i)$ for some $x_i\in \overline{S^{\pi}_{i}}$. 
	
	%Define now $\mathcal{D}\subseteq \CC^{n\times t}$ to be the subspace of $n\times t$ complex diagonal matrices. %Thus the set $U_i\mathcal{D}V^\top _i\cap\Theta$ is a nonempty open subset of $U_i\mathcal{D}V^\top _i\cap\MM_{\CC}$ containing the matrix $X_i$.
	%Define the polynomial maps
	%$P_i\colon\CC^n\to\CC^{n\times t}$ given by $P_i(x):=U_i\,\Diag(x)\,V_i^\top $. Appealing to Theorems~\ref{thm:invariant varieties} and ~\ref{thm:SC in terms of MC}, we obtain the  equality $$P_i^{-1}(U_i\mathcal{D}V^\top _i\cap\MM_{\CC})= S_{\CC}.$$
	We deduce that the diagonal restriction of $\Theta$, namely the set 
	$$W:=\{x\in \CC^{n}: \Diag(x)\in\Theta\},$$
	is an absolutely symmetric, open subset of $S_{\CC}$ and it intersects  each variety  $\overline{S^{\pi}_{i}}$. In particular, $W$ intersects each 
	 irreducible component $\overline{{S}_{i}}$. Since nonempty open subsets of irreducible varieties are dense, we deduce that $W$ is dense in $S_{\CC}$.
		 Moreover, since for any point $x\in W$, the matrix $\Diag(x)$ lies in $\Theta$, we conclude 
	\begin{itemize}[leftmargin=0.7cm]
		\item $\Diag(x)$ lies in $\mathcal{M}_{\CC}^{\textup{reg}}$ (and hence by orthogonal invariance so do all matrices  $U\,\Diag(x)\, V^\top $ with $U\in O_{\CC}(n), V\in O_{\CC}(t)$) and $x$ lies in $S_{\CC}^{reg}$, 
		\item equation \eqref{eqn:tangent space} holds for $X=\Diag(x)$, %{\color{green} Giorgio: what is $x$ ? Maybe $x=x_1$ ?			Or any $x\in W_1$ ?}
		 and hence by orthogonal invariance of $\mathcal{M}_{\CC}$ and of the description \eqref{eqn:tangent space}, the equation continues to hold for  $X=U\,\Diag(x)\,V^\top $, where $U$ and $V$ arbitrary orthogonal matrices.
	\end{itemize}
	Thus all the desired conclusions hold for any $x$ in the open dense subset $W$ of $S_{\CC}$.
	% A little thought shows they remain true on the absolutely symmetric open subset $\Pi^{\pm}_n W$ of $S_{\CC}$. Since $W$ intersect each $\overline{S^{\pi}_{i}}$, the set  $\Pi^{\pm}_n W$ intersects each irreducible component $\overline{{S}_{i}}$. Since nonempty open subsets of irreducible varieties are dense, we deduce that $\Pi^{\pm}_n W$ is open and dense in $S_{\CC}$, completing the proof. 
The result follows.
%\end{enumerate}
\end{proof}

We are now ready to prove the main result of this paper,  equation \eqref{ED_res} from the introduction. As a byproduct, we will establish an explicit bijection between the ED critical points of a generic matrix $Y = U \, \Diag(y)\, V^\top  \in \CC^{n\times t}$ 
on $\MM$ and the ED critical points of $y$ on $S_{\CC}$.

\begin{theorem}[ED degree]\label{thm:main} Consider a $\Pi^{\pm}_n$-invariant variety $S \subseteq \RR^n$ and the induced real variety $\MM := \sigma^{-1}(S)$.
Then a generic matrix $Y\in\CC^{n\times t}$ admits a decomposition $Y=U \, \Diag(y) \, V^\top $, for some matrices $U \in O_{\CC}(n)$, $V \in O_{\CC}(t)$, and  $y\in \CC^n$. Moreover, then the set of ED critical points of $Y$ with respect to $\MM$ is 
$$
\{U \, \Diag(x) \, V^\top  \ : \   \text{$x$ is an ED critical point of $y$ with respect to $S$}\},
$$
In particular, equality $\mathrm{EDdegree}(\MM) = \mathrm{EDdegree}(S)$ holds.
\end{theorem}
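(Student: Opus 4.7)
The plan is to use the simultaneous algebraic SVD theorem (Theorem \ref{thm:diagonal critical point}) to diagonalize both $Y$ and any candidate ED critical point $X$ in a common frame, then translate the ED critical-point condition on $X$ into the ED critical-point condition on its diagonal entries via the explicit description of $\TT_{\MM_\CC}(X)$ from Lemma \ref{transfer of tangent spaces}. This will produce an explicit bijection $x \leftrightarrow U\Diag(x)V^\top$ between the ED critical points of $y$ on $S$ and those of $Y$ on $\MM$, immediately yielding the ED degree equality.

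First, I would restrict $Y$ by several open dense conditions: (i) by Corollary \ref{cor:algebraicSVD as we had}, $Y$ admits an algebraic SVD $Y = U\Diag(y)V^\top$ with the eigenvalues of $YY^\top$ nonzero and distinct, activating Theorem \ref{thm:diagonal critical point}; and (ii) by Lemma \ref{lem:generic data point gives generic critical points}, every ED critical point of $Y$ lies in an orthogonally invariant open dense subset of $\MM_\CC$ on which Lemma \ref{transfer of tangent spaces}\ref{transfer of tangent spaces: part 1} applies. Given such an ED critical point $X$, Theorem \ref{thm:diagonal critical point} produces some simultaneous algebraic SVD of $X$ and $Y$; the distinctness of the $y_i^2$ forces this SVD to agree with the fixed $(U,V)$ up to a signed permutation, and since $S_\CC$ is $\Pi_n^\pm$-invariant we may write $X = U\Diag(x)V^\top$ with $x\in S_\CC^{\textup{reg}}$ (membership in $S_\CC$ comes from Theorem \ref{thm:SC in terms of MC}).

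The ED condition $Y - X \perp \TT_{\MM_\CC}(X)$ is then to be unpacked using \eqref{eqn:tangent space}. A tangent vector splits as $UZ_1\Diag(x)V^\top + U\Diag(x)Z_2^\top V^\top + U\Diag(a)V^\top$ with $Z_1, Z_2$ skew-symmetric and $a\in\TT_{S_\CC}(x)$, and its bilinear pairing against $Y - X = U\Diag(y-x)V^\top$ simplifies, using $U^\top U = I$, $V^\top V = I$, and the fact that the diagonal of a skew-symmetric matrix vanishes, to $\sum_i(y_i - x_i)a_i$. Hence the ED critical condition for $X$ on $\MM$ is equivalent to $y - x \perp \TT_{S_\CC}(x)$, the ED critical condition for $x$ on $S$. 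For the reverse direction, since the map $Y\mapsto y$ is dominant, the vector $y$ is itself a generic data point in $\CC^n$ whenever $Y$ is generic; combining Lemma \ref{lem:generic data point gives generic critical points} with Lemma \ref{transfer of tangent spaces}\ref{it:claim_gen}, I may assume every ED critical point $x$ of $y$ on $S$ lies in the open dense set where $X = U\Diag(x)V^\top$ is regular in $\MM_\CC$ and \eqref{eqn:tangent space} holds. The same computation read backwards shows $X$ is then an ED critical point of $Y$ on $\MM$, and injectivity of $x\mapsto U\Diag(x)V^\top$ delivers the claimed bijection.

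The main obstacle is the genericity bookkeeping: to carry out both directions simultaneously one must combine several open dense restrictions on $Y$, namely existence of an algebraic SVD with distinct nonzero squared singular values, localization of the ED critical points of $Y$ in the good stratum of $\MM_\CC$, and genericity of the pushed-forward $y$ so that its ED critical points on $S$ land in the good stratum of $S_\CC$, into a single open dense subset of data matrices. Dominance of the singular-value map $Y\mapsto y$, witnessed by the fact that every vector of $\CC^n$ arises from a diagonal matrix, ensures the third condition is compatible with the first two. Once all this is in place, the tangent-space description of Lemma \ref{transfer of tangent spaces} does the rest, and the equality $\mathrm{EDdegree}(\MM) = \mathrm{EDdegree}(S)$ follows by counting the two sides of the bijection.
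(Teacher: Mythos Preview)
Your proposal is correct and follows essentially the same route as the paper: both directions hinge on Theorem~\ref{thm:diagonal critical point} for simultaneous diagonalization, Lemma~\ref{transfer of tangent spaces} for the tangent-space description \eqref{eqn:tangent space}, and Lemma~\ref{lem:generic data point gives generic critical points} to confine the critical points to the good loci, with the trace computation reducing the orthogonality condition to $(y-x)^\top a = 0$. The paper organizes the forward and reverse computations by treating the three generator types \emph{i)}--\emph{iii)} separately and invokes Theorem~\ref{thm:generic points in M_C} (with $S=\RR^n$) in place of your dominance remark, but the content is the same.
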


\begin{proof} 
For generic $Y\in \CC^{n\times t}$, the eigenvalues of $YY^\top $ are nonzero and distinct.
Then by Corollary~\ref{cor:algebraicSVD as we had}, we can be sure that $Y$ admits an algebraic SVD. We fix such a decomposition $Y=U \, \Diag(y) \, V^\top $, for some $U \in O_{\CC}(n)$, $V \in O_{\CC}(t)$, and  $y\in \CC^n$.

Let $X$ be an ED critical point of $Y$ with respect to $\mathcal{M}$. By Theorem~\ref{thm:diagonal critical point}, we can assume that $X$ and $Y$ admit a simultaneous SVD, that is
both $U'^\top XV'^\top $ and $U'^\top YV'^\top $ are diagonal for some matrices $U' \in O_{\CC}(n)$, $V' \in O_{\CC}(t)$. Notice that the columns of $U$ and $U'$ are equal up to a sign change and a permutation. Similarly the first $n$ columns of $V$ and $V'$ are equal up to a sign change and a permutation. Hence we may assume that $X$ can be written as $X=U\,\Diag(x)\,V^\top $ for some $x\in S_{\CC}$.
%, $X$ lies in $\MM^{\textup reg}_{\CC}$, the algebraic singular values of $X$ lies in $S^{\textup reg}_{\CC}$
%By Theorem \ref{thm:generic points in M_C} and Lemma \ref{lem:generic data point gives generic critical points},
% any ED critical point of $Y$ with respect to $\MM_\CC$ has the form $X = U\, \Diag(x) \,V^\top $ for some 
%$x\in \CC^n$. 
By  Lemmas~\ref{lem:generic data point gives generic critical points} and \ref{transfer of tangent spaces}, we can further assume that $X$ lies in $\MM_\CC^{\rm reg}$ and $x$ lies in $S_\CC^{\rm reg}$, and moreover the
%$X= U' \, \Diag(x')\,V'^\top $ for some $U'\in O_\CC(n)$, $V'\in O_\CC(t)$, and $x'\in S_\CC^{\rm reg}$. 
%Thus $x$ differs from $x'$ by a signed permutation. 
%Since $S_\CC^{\rm reg}$ is $\Pi_n^\pm$-invariant, we know $x\in S_\CC^{\rm reg}$. 
 tangent space $\TT_{\MM_\CC}(X)$ at $X = U \, \Diag(x) \,V^\top $ is given in \eqref{eqn:tangent space}. 

We will now show that $x$ is an ED critical point of $y$ with respect to $S$. To see this, observe the inclusion
%We are now ready to prove the first statement of the theorem. Suppose $X= U \, \Diag(x) \, V^\top $ is an ED critical point of $Y$ 
%where $x\in S_\CC^{\rm reg}$ and $X$ satisfies \eqref{eqn:tangent space}. Then, 
$$
\{U \, \Diag(a) \, V^\top  \ : \ a\in \mathcal{T}_{S_\CC}(x)\} \subseteq \mathcal{T}_{\mathcal{M}_\CC}(X).
$$
and hence $$0={\rm trace}(U\,\Diag(y-x)\, V^\top (U\, \Diag(a)\, V^\top )^\top ) \quad\textrm{for any } a\in  \mathcal{T}_{S_\CC}(x).$$
Simplifying, we immediately conclude
$(y-x)^\top  a=0$ for any $a\in  \mathcal{T}_{S_\CC}(x)$, and hence $x$ is an ED critical point of $y$ with respect to $S$.

Conversely, suppose $x\in S^{\textup reg}_{\CC}$ is an ED critical point of $y$ with respect to $S$. Applying Theorem~\ref{thm:generic points in M_C} with $Q=\CC^n$, we deduce that if a set $Q\subseteq \CC^{n}$ contains an open dense set in $\CC^n$, then $$\{\widehat U \,\Diag(z)\,\widehat{V}^\top : z\in Q,~ \widehat U\in O_{\CC}(n),~\widehat V\in O_{\CC}(t)\}$$
contains an open dense subset of $\CC^{n\times t}$.
%We aim to show that $X:=U\,\Diag(x)\,V^\top $ is an ED critical point of $Y$ with respect to $\MM$. 
Define now the matrix $X:=U\, \Diag(x)\, V^\top $.
Then by Lemmas \ref{lem:generic data point gives generic critical points} and \ref{transfer of tangent spaces}, we may assume that 
$X$ is regular and the
tangent space of $\MM_\CC$ at $X$ is generated
by all matrices of the form 
\begin{enumerate} [label=\emph{\roman*})]
\item\label{it1}
$UZ \, \Diag(x) \,V^\top $ with $Z$ skew-symmetric,
\item\label{it2}
$U \, \Diag(a) \, V^\top $ where $a$ belongs to the tangent space of $S_\CC$ at $x$,
\item\label{it3}
$U \, \Diag(x) \, Z^\top  V^\top $ with $Z$ skew-symmetric.
\end{enumerate}
We will show \begin{equation}\label{eq:criticalX}
Y-X\perp \TT_{\MM_\CC}(X)
\end{equation}
by dividing the proof according to the three cases \ref{it1},\ref{it2}, \ref{it3} above.
%\add{
%We may assume that $\MM_\CC$ is equidimensional. \comment{add reference/explain}  Consider that the general fiber of the map $\MM_\CC\rig{\sigma}S_\CC$ consists of a single orbit for
%the action of $O_\CC(n)\times O_\CC(t)$. Indeed the general matrix in $\MM_\CC$ has an algebraic SVD and 
%this means that its $O_\CC(n)\times O_\CC(t)$-orbit is closed. Consider also that the singular locus $Sing \MM_\CC$
%is $O_\CC(n)\times O_\CC(t)$-invariant. It follows that $\sigma(Sing \MM_\CC)$ is a proper subvariety of $S_\CC$.
%By Lemma \ref{lem:generic data point gives generic critical points} we may assume that the critical points of $y$ lie outside $\sigma(Sing \MM_\CC)$. This implies that $\MM_\CC$ is regular at
%$U\Diag(x)V^\top $ . }
%\comment{$\MM_\CC^{\rm sing}$ mentioned above should be replaced by another proper subvariety}
For \ref{it1}, observe 
$$\mathrm{trace}\left((X-Y)(UZ \, \Diag(x) \, V^\top )^\top \right) =
\mathrm{trace}\left(\Diag(x-y) \, \Diag(x)^\top  Z^\top  \right ) =0,$$
where the last equality follows from Lemma \ref{lem:symmetric}. The 
computation for  \ref{it3} is entirely analogous. 
For  \ref{it2}, we obtain 
$$\mathrm{trace}\Big((X-Y) (U \, \Diag(a) \, V^\top )^\top \Big) = \mathrm{trace}\Big((\Diag(x-y) \Diag(a)^\top \Big)=0,$$ 
where the last equation follows from the hypothesis that $x$ is an ED critical point of $y$ on $S_\CC$. 
We conclude that $X$ is an ED critical point of $Y$ relative to $\MM_{\CC}$, as claimed. The equality, $\mathrm{EDdegree}(\MM) = \mathrm{EDdegree}(S)$,  quickly follows.
%Next we prove the second statement. Invoking Theorem~\ref{thm:generic points in M_C}, there exists $Y\in \CC^{n\times t}$ 
%with algebraic SVD $Y = U\, \Diag(y)\, V^\top $ such that $Y$ has EDdegree$(\MM)$ ED critical points with respect to $\MM$, 
%and $y$ has EDdegree$(\MM)$ ED critical points with respect to $S$. By the first statement we just proved, we have the desired equality of ED degrees. 
\end{proof}

\begin{example} \label{ex:4 examples}
{\rm
To illustrate Theorem~\ref{thm:main}, we now derive the ED degree of some notable orthogonally invariant varieties  summarized in the following table. The pairs $(\MM, S)$ in all these examples were also discussed in \cite[Section~4]{DLT}. The dimension of $\MM$ or $\MM_\CC$ can be computed using \eqref{eq:dim formula}.\\

\begin{center}
\begin{tabular}{|cc|cc|c|}
\hline
$\substack{\text{orthogonally}\\\text{invariant}\\\text{variety }\MM}$ & dimension
 & $\substack{\text{absolutely}\\\text{symmetric}\\\text{variety }S}$  & dimension  & EDdegree\\
\hline
$\RR^{n\times t}_r$ & $r(n+t-r)$ & $\RR^n_r$ & $r$ & ${n \choose r}$ \\
\hline
$\mathcal{E}$ & 6 & $E_{3,2}$ & 1 & 6\\
\hline
$O(n)$ & ${n \choose 2}$ & $\{(\pm 1, \ldots,\pm 1)\}$ & 0 & $2^n$\\
\hline
$SL_n^\pm$ & $n^2-1$ & $H_n$ & $n-1$ & $n 2^n$\\
\hline
$\mathcal{F}_{n,t,d}$ ($d$ even) & $nt-1$  & $F_{n,d}$ & $n-1$ & \cite[Cor. 2.12]{hwangrae} \\
\hline
\end{tabular}
\end{center}

\vspace{0.2cm}

In the first three examples, the set $S$ is a subspace arrangement and hence its ED degree is the number of distinct maximal subspaces in the arrangement. We will elaborate on this situation in Section~\ref{subsec:subspaceArrangements}.

The matrix variety $SL_n^\pm$ consists of all matrices $A \in \CC^{n \times n}$ satisfying $\det(A) = \pm 1$. 
The ED degree of $\textup{SL}_n^{\pm}$ was explicitly computed in \cite{BaaijensDraisma}. We show below how our main theorem provides a simple alternate proof of their result.

The absolutely symmetric variety  
$S$ in this case is $H_n := \{ x \in \RR^n \,:\, x_1 x_2 \cdots x_n = \pm 1 \}$. 
To compute the ED degree of $H_n$, we add up the ED degrees of its two irreducible components
$$
H_n^+ := \{ x \in \RR^n \,:\, x_1 x_2 \cdots x_n =  1 \}
$$
and
$$
H_n^- := \{ x \in \RR^n \,:\, x_1 x_2 \cdots x_n =  -1 \}.
$$
To compute the ED degree of $H_n^+$, we begin with a point $y\in \CC^n$. Then by a straightforward computation, 
$x$ is an ED critical point of $y$ with respect to $H_n^+$ if and only if $x$ solves the system
\begin{align} \label{eqn:KKT for Hn+}
\begin{cases}
x_i (x_i-y_i) = x_n (x_n-y_n) \quad\text{ for all $i= 1,\ldots,n-1$}\\
 x_1 \cdots x_n = 1.
\end{cases}
\end{align}
By B\'{e}zout's Theorem, 
we know EDdegree$(H_n^+) \leq  n2^{n-1}$. We now argue that the data point $y=0$ has 
$n2^{n-1}$ ED critical points with respect to $H_n^+$ which proves that EDdegree$(H_n^+) =  n2^{n-1}$.
%\comment{B\'{e}zout's theorem states if there are finitely many ED critical points of $y$, the number must be at most 
%$n2^{n-1}$. (Note that the infinity in $\mathbb{P}^n$ is never a solution of the homogenized KKT system)
%Thus EDdegree$(H_n^+) \leq   n2^{n-1}$.
%There is a fact about ED degree which is implicitly proved in the ordinary paper:
%If $y$ is a data point such that the number of ED critical points is finite, then the number must be bounded by the ED degree. 
%(This is based on a fact saying that finite fibers of a dominant map of finite degree must be bounded in cardinality by the degree. 
%In our case the map is from ED correspondence into data space, the degree is ED degree.)
%Now we take $y=0$ and we see there are $n2^{n-1}$ ED critical points of $y$. This means 
%$$
%n 2^{n-1}\leq {\rm EDdegree}(H_n^+).
%$$
% }

When $y=0$, the system \eqref{eqn:KKT for Hn+} is equivalent to 
\begin{align*}
\begin{cases}
x_1^2 = \cdots = x_{n}^2\\
 x_1 \cdots x_n = 1.
\end{cases}
\end{align*}
which has $n2^{n-1}$ solutions in $(H_n^+)_\CC$. Indeed, choose $x_1$ such that  $x_1^{n}=\pm 1 \  \text{($2n$ choices)}$; then 
choose $x_i$ for $i=2,\ldots, n-1$ such that $x_i^2 = x_1^2$ \text{($2$ choices for each $i$)}; finally set $x_n = \f{1}{x_1 \cdots x_{n-1}}$.
%\begin{align*}
%& x_1^{n}=\pm 1 \  \text{($2n$ choices)}  \\
%& x_i^2 = x_1^2 \text{ for any $i = 2,\ldots,n-1$}  \  \text{($2$ choices for each $i$, after $x_1$ is fixed)} \\
%& x_n = \f{1}{x_1 \cdots x_{n-1}}  \   \text{(1 choice, after $x_1,\ldots,x_{n-1}$ are all fixed)}. 
%\end{align*}
Hence EDdegree$(H_n^+) = n2^{n-1}$. Similarly, EDdegree$(H_n^-) = n2^{n-1}$, and therefore we conclude EDdegree$(H_n)=n2^n$.

The variety $\mathcal{F}_{n,t,d} = \{ X \in \RR^{n \times t} \,:\, \| X \|_d = 1 \}$  is the unit ball of the 
Schatten $d$-norm $\|X\|_d := \left[ \sum_{i=1}^n \sigma_i(X)^d \right]^{\frac{1}{d}}$. When $d$ is even, 
the corresponding absolute symmetric variety is the 
{\em affine Fermat hypersurface} 
\begin{eqnarray*} 
F_{n,d} := \lt\{x\in \RR^n \ : \  \sum_{i=1}^n x_i^d = 1\rt\}.
\end{eqnarray*}
The ED degree of a Fermat hypersurface was computed in \cite{hwangrae}.
}
\end{example}

%%%%%%%%%%%%%%%%%%%%%%%%%%%%%%%%%%%%%%%%%%%%%%%%%%%%%%%%%%%%
%\section{Consequences and remarks}\label{sec:consequences}

\section{Orthogonally invariant varieties from subspace arrangements}\label{subsec:subspaceArrangements}
In this section, we augment the results of the previous section in the special (and important) case when $S$ is a subspace arrangement. 
Many important matrix varieties, such as the rank varieties $\RR^{n \times t}_r$ and the essential variety  $\EE$, fall in this category. 
Recall that $S$ is a {\em subspace arrangement} if $S$ can be written as a union of finitely many affine subspaces $\{S_i\}^k_{i=1}$ of $\RR^n$. Assuming that the representation of $S$ is chosen in such a way that $S_i$ is not contained in $S_j$ for any distinct $i,j$, we call $S_i$ the {\em affine components} of $S$.
The following result follows directly from Theorem~\ref{thm:main}. 

\begin{corollary}[Affine arrangements] \label{cor:subspace arrangement}
Consider a $\Pi^{\pm}_n$-invariant subspace arrangement $S \subseteq \RR^n$ with affine components $\{S_i\}^k_{i=1}$, and define the induced real variety $\MM := \sigma^{-1}(S)$. Then the equality, $\textup{EDdegree}(\MM) = k$, holds. 

Moreover, a general data point $Y$ in $\RR^{n\times t}$ has  
%\begin{enumerate}[leftmargin=0.8cm]
%\item 
 exactly $k$ ED-critical points with respect to $\mathcal{M}$: for any decomposition  $Y=U\, \Diag(\sigma(Y)) \, V^\top $ with orthogonal matrices $U\in O(n)$ and $V\in O(t)$, the set of ED-critical points is precisely
$$\{ U\, \Diag(x) \, V^\top : x\textrm{ is the orthogonal projection of } \sigma(Y) \textrm{ onto } S_i\}.$$
In particular, all ED critical points of $Y$ with respect to $\MM$ are real.
%\item $\textup{EDdegree}(\MM) = k$.
%\item If $Y$ is a generic data point in $\RR^{n \times t}$ then all ED critical points of $Y$ with respect to $\MM$ 
%are real, and there are $k$ of them.
%\end{enumerate}
\end{corollary}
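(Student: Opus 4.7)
The plan is to apply Theorem~\ref{thm:main} directly and then carry out the elementary analysis of ED critical points on an affine subspace.

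For a generic real $Y$ with distinct nonzero singular values, the ordinary real SVD $Y = U\, \Diag(\sigma(Y))\, V^\top$ with $U \in O(n)$ and $V \in O(t)$ serves as an algebraic SVD in the sense of Corollary~\ref{cor:algebraicSVD as we had}, since real orthogonal matrices are in particular complex orthogonal. Setting $y := \sigma(Y) \in \RR^n$, Theorem~\ref{thm:main} identifies the ED critical points of $Y$ with respect to $\MM$ as exactly $\{U\, \Diag(x)\, V^\top : x \text{ an ED critical point of } y \text{ with respect to } S\}$, and gives $\mathrm{EDdegree}(\MM) = \mathrm{EDdegree}(S)$. It therefore suffices to verify that $y$ has exactly $k$ ED critical points on $S$, each being the orthogonal projection of $y$ onto one of the affine components $S_i$.

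Next I would compute $\mathrm{EDdegree}(S)$ via its irreducible components. The Zariski closures $\overline{S_i} \subseteq \CC^n$ are complex affine subspaces, hence irreducible; since no $S_i$ is contained in $S_j$, these closures are the distinct irreducible components of $S_\CC$. By the additivity of EDdegree over irreducible components noted in the remark following Lemma~\ref{lem:generic data point gives generic critical points}, we obtain
\[ \mathrm{EDdegree}(S) \;=\; \sum_{i=1}^k \mathrm{EDdegree}(\overline{S_i}). \]
For each complex affine subspace $\overline{S_i}$, the ED critical point condition requires $y - x$ to annihilate the (constant) tangent space $W_i$ of $\overline{S_i}$ under the standard bilinear form. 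Since $W_i$ is the complexification of a real subspace, it satisfies $W_i \cap W_i^{\perp} = 0$, so the projection onto $\overline{S_i}$ parallel to $W_i^{\perp}$ is well-defined and yields the unique ED critical point of any $y \in \CC^n$. Hence $\mathrm{EDdegree}(\overline{S_i}) = 1$, giving $\mathrm{EDdegree}(S) = k$.

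Finally, for the realness statement, $y = \sigma(Y)$ is real and each $S_i$ is a real affine subspace, so the bilinear projection of $y$ onto $\overline{S_i}$ coincides with the Euclidean orthogonal projection of $\sigma(Y)$ onto $S_i$; in particular every ED critical point $x_i \in S_i$ lies in $\RR^n$, and the matrix $U\, \Diag(x_i)\, V^\top$ is then a product of real matrices, hence real. I expect no serious obstacle beyond the mild verification that the complexified directions $W_i$ are nondegenerate under the bilinear form, which holds because each $W_i$ is defined over $\RR$; the remainder is just assembly of Theorem~\ref{thm:main} with the one-line calculation of the ED degree of an affine subspace.
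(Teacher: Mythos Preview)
Your proposal is correct and follows essentially the same approach as the paper: both invoke Theorem~\ref{thm:main} to reduce to $S$ and then identify the ED critical points of a real vector on a real affine arrangement as the $k$ orthogonal projections onto the components. The only cosmetic differences are that the paper makes the passage from $\CC^{n\times t}$-genericity to $\RR^{n\times t}$-genericity explicit (by intersecting the open dense $\Theta\subseteq\CC^{n\times t}$ with $\RR^{n\times t}$), while you compute $\mathrm{EDdegree}(S)=k$ more formally via additivity over irreducible components and the nondegeneracy of the bilinear form on the real-defined subspaces $W_i$.
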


\begin{proof} 
	Let $\Theta$ be the dense open subset of $\CC^{n\times t}$ guaranteed to exist by Theorem~\ref{thm:main}. Clearly we can also assume that each matrix $Y\in \Theta$ has $\textup{EDdegree}(\MM)$ many ED critical points with respect to $\mathcal{M}$.
	A standard argument shows that the set 
	$\Theta_{\RR}:=\{Y\in \RR^{n\times t}: Y\in \Theta\}$
 	is a dense open subset of $\RR^{n\times t}$. Fix a matrix $Y\in \Theta_{\RR}$ and consider a singular value decomposition $Y=U\, \Diag(\sigma(Y)) \, V^\top $ with orthogonal matrices $U\in O(n)$ and $V\in O(t)$. 
 	By Theorem~\ref{thm:main}, the set of ED critical points of $Y$ with respect to $\MM$ is given by
 	$$
 	\{U \, \Diag(x) \, V^\top  \ : \   \text{$x$ is an ED critical point of $\sigma(Y)$ with respect to $S$}\}.
 	$$
 	Since $\sigma(Y)$ is a real vector, the ED critical points of $\sigma(Y)$ with respect to $S$ are precisely the orthogonal projections of $\sigma(Y)$ on each component $S_i$. Therefore we deduce $k=\textup{EDdegree}(S)=\textup{EDdegree}(\MM)$. 
%	The first two statements follow directly from Theorem~\ref{thm:main}. 
%For (3), note that the Zariski closure of an arrangement of affine subspaces $S$ 
%is the arrangement of complex subspaces obtained by taking the Zariski closure of each subspace in $S$.
%A data point $d \in \RR^n$ projects to the same real point on both an affine subspace in $\RR^n$ and its Zariski closure in $\CC^n$.
\end{proof}

The first three examples in Example~\ref{ex:4 examples} illustrate Corollary~\ref{cor:subspace arrangement}. 
Typically, as the data point $y \in \RR^n$ varies, the number of real ED critical points of $y$ with respect to a  variety $\VV \subseteq \RR^n$ varies. Corollary~\ref{cor:subspace arrangement} shows that when $S$ is a subspace arrangement, 
all ED critical points of a real data point with respect to $\MM = \sigma^{-1}(S)$ are again real and their number is constant. This unusual feature is easy to see using Theorem~\ref{thm:main} that creates a bijection between the ED critical points of $\MM$ and $S$, but is not at all obvious if $S$ is not in the picture. 

In common examples, outside of the subspace arrangement case, all ED critical points of a real data point may be purely imaginary and the number of real critical points typically varies as the data point moves around. For instance, the hyperbola $H_n$ in Example~\ref{ex:4 examples} can have complex ED critical points for a generic $y \in \RR^n$. The same is therefore true for  $\textup{SL}_n^{\pm}$.

In a sense, Corollary~\ref{cor:subspace arrangement} generalizes the fact that 
the pairs of singular vectors of a real matrix are real. 
Indeed, the pairs of singular vectors of a real matrix $Y$ correspond to the ED critical points of $Y$ with respect to the 
orthogonally invariant variety of rank one matrices; the corresponding absolutely symmetric variety is the union of 
all coordinate axes.

\begin{remark}\label{rem:symmetric} {\rm Results analogous to those in this paper hold for symmetric matrices under the action of the orthogonal group 
$U \cdot A = U A U^\top $. More precisely, consider the space of real $n\times n$ symmetric matrices $\mathcal{S}^n$. A set $\mathcal{M}\subseteq\mathcal{S}^n$ is {\em orthogonally invariant} provided $U \mathcal{M} U^\top =\mathcal{M}$ for all matrices $U\in O(n)$. Such a set $\mathcal{M}$ can be written as 
$\lambda^{-1}(S)$ where $\lambda\colon\mathcal{S}^n\to\RR^n$ assigns to each matrix $X$ the vector of its eigenvalues in a nonincreasing order and $S$ is the diagonal restriction
$S=\{x\in\RR^n: \Diag(x)\in \mathcal{M}\}$. Conversely any permutation invariant set $S\subseteq\RR^n$ gives rise to the orthogonally invariant set $\lambda^{-1}(S)$. Similar techniques to the ones developed here can then be used to study the correspondence between ED critical points of algebraic varieties $S$ and $\lambda^{-1}(S)$. 
This research direction deserves further investigation. }
\end{remark}

\bibliographystyle{plain}
\bibliography{bibliography}
\end{document}